\numberwithin{equation}{section} 
\theoremstyle{plain}
\newtheorem{thm}{Theorem}[section]
\newtheorem{lemma}[thm]{Lemma}
\theoremstyle{definition}
\newtheorem{ex}{Example}
\newcommand{\Span}{\text{Span}}
\newcommand{\tr}{\mathrm{Tr}}
\newcommand{\bbm}{\begin{bmatrix}}
\newcommand{\ebm}{\end{bmatrix}}
\DeclareMathOperator{\cha}{\rm char}
\DeclareMathOperator{\proj}{\rm Proj}
\DeclareMathOperator{\ODAC}{\rm ODAC}
\DeclareMathOperator{\ad}{\rm ad}
\DeclareMathOperator{\Tr}{\rm Tr}
\begin{document}

\title[Orthogonal decompositions]{Orthogonal decompositions of classical Lie algebras over finite commutative rings}
\author{Songpon Sriwongsa}

\address{Songpon Sriwongsa\\ Department of Mathematical Sciences \\ University of Wisconsin-Milwaukee\\ USA}
\email{\tt songponsriwongsa@gmail.com}

\keywords{Cartan subalgebras; Local rings; Orthogonal decomposition.}

\subjclass[2010]{Primary: 17B50; Secondary: 13M05}

\maketitle

\begin{abstract}
Let $R$ be a finite commutative ring with identity.
In this paper, we give a necessary condition for the existence of an orthogonal
decomposition of the special linear Lie algebra over $R$. Additionally, we study orthogonal decompositions of the symplectic Lie algebra and the special orthogonal Lie algebra over $R$.
\end{abstract}

\section{Introduction}
An {\it orthogonal decomposition} (OD) of a finite dimensional Lie algebra $\mathfrak{L}$ over the field of complex numbers  $\mathbb{C}$ is a decomposition of $\mathfrak{L}$ into a direct sum of its Cartan subalgebras which are pairwise orthogonal with respect to the Killing form. The earliest recorded mention for orthogonal decompositions of Lie algebras was by Thompson who used an OD of the Lie algebra $E_8$ for the construction of a finite simple group of a special order, also known as the Thompson group \cite{T76, TH76}.
In the 1980s, Kostrikin et al. developed the theory of orthogonal decompositions of simple Lie algebras of types $A, B, C$ and $D$ over $\mathbb{C}$ \cite{KK81,KKU84}. During the past four decades, the OD problem of Lie algebras has attracted greater attentions due to its applications in other fields. For instance, an OD of the special linear Lie algebra $\mathfrak{sl}_n(\mathbb{C})$ is related to mutually unbiased bases (MUBs) in $\mathbb{C}^n$ which play an important role in quantum information theory \cite{BS07, R09}. A connection between the problem of constructing maximal collections of MUBs and the existence problem of an OD of $\mathfrak{sl}_n(\mathbb{C})$ was found by Boykin et al. \cite{BS07}. 

An OD of $\mathfrak{sl}_n(\mathbb{C})$ has been constructed for all $n$ which are a power of a prime integer \cite{KK81}. In the latter cases, the existence of an OD is still an open question even when $n = 6$ (the first positive integer that is not a power of a prime). We refer the reader to \cite{BZ16} for a recent development of the OD problem of $\mathfrak{sl}_6(\mathbb{C})$. ์Note that the symplectic Lie algebra $\mathfrak{sp}_6(\mathbb{C})$ is a subalgebra of $\mathfrak{sl}_6(\mathbb{C})$. The OD problem of $\mathfrak{sp}_6(\mathbb{C})$ was studied in \cite{T2000}. It is natural to ask when an orthogonal decomposition exists for Lie algebras over other fields or, more generally, over other rings. Recently, 
the problem in this direction for the case of Lie algebra $\mathfrak{sl}_n$ over a finite commutative ring with identity was studied in \cite{ฺSZ18}. 

Throughout this paper we assume that all modules are unitary and $L$ denotes a Lie algebra over a finite commutative ring $R$ with identity that is free of rank $n$ as an $R$-module. Recall that a {\it Cartan subalgebra} of $L$ is a nilpotent subalgebra which equals its normalizer in $L$. Every Cartan subalgebra of a finite dimensional semisimple Lie algebra over $\mathbb{C}$ is abelian. However, for a Lie algebra over a general finite commutative ring with identity, a Cartan subalgebra is not necessary abelian. For example, there is a finite dimensional semisimple Lie algebra over a finite field which has a non-abelian Cartan subalgebra \cite{D72}.  Here, we only consider an orthogonal decomposition of $L$ that is formed by abelian Cartan subalgebras and use the abbreviation $\ODAC$ (AC for ``abelian Cartan''). The orthogonality is defined via the Killing form
\[
K(A, B) := Tr(\ad A \cdot \ad B).
\]
This form is well-defined because all considered Lie algebras are free modules of finite rank. Therefore, an ODAC of $L$ is a decomposition 
\[
L = H_0 \oplus H_1 \oplus \ldots \oplus H_k, k \in \mathbb{N}_0,
\]
where $H_i$'s are pairwise orthogonal abelian Cartan subalgebras of $L$ with respect to the Killing form.

It was proved in \cite{ฺSZ18} that an $\ODAC$ of  $\mathfrak{sl}_n(R)$ can be constructed  under some sufficient conditions on the ring $R$ and $n$. In this paper, we continue to study the existence problem of an $\ODAC$ of  $\mathfrak{sl}_n(R)$ by considering a necessary condition on $R$ and $n$. One of our motivations is to illuminate the result for the orthogonal decomposition problem of $\mathfrak{sl}_n$ when $n$ is a non prime power integer especially $n = 6$. Moreover, we consider the problem of $\ODAC$ of the symplectic Lie algebra $\mathfrak{sp}_n$  and the special orthogonal Lie algebra $\mathfrak{so}_n$ over $R$ with odd characteristic by using the techniques motivated by the previous works of Kostrikin et al. on these Lie algebras over $\mathbb{C}$ \cite{KKU84,KT94}.

In Section \ref{sln}, we first relate the problem of $\ODAC$ of $L$ with the ring decomposition of $R$. We use the result to show  that if $\mathfrak{sl}_n(R)$ has an $\ODAC$, then $\cha (R)$ is relatively prime to $n$. In Section \ref{spn}, we construct an $\ODAC$ of $\mathfrak{sp}_{2^{m + 1}}(R)$ when $\cha (R)$ is odd, by restricting the $\ODAC$ of $\mathfrak{sl}_{2^{m+1}}(R)$ constructed in \cite{ฺSZ18}. The restriction was used for $\mathfrak{sp}_{2^{m + 1}}(\mathbb{C})$ (see \cite[Lemma 2.1.4]{KT94}) and one can use Lie's theorem to verify that the restricted decomposition is an OD of $\mathfrak{sp}_{2^{m + 1}}(\mathbb{C})$. However, Lie's theorem does not exist in a general commutative ring case. Thus, some arguments need to be modified in our setting here. In the complex number case, an OD of $\mathfrak{so}_n(\mathbb{C})$ was constructed 
by using its standard basis elements \cite{KKU84}. For $n = 2k $, the authors verified this OD by relating the construction to $1$-factorization of the complete graph with $2k$ vertices and used the result to form an OD of this Lie algebra when $n = 2k - 1$. One can realize that the similar technique also works for any commutative ring with identity case. For the sake of completeness, we describe in detail about this approach for $\mathfrak{so}_n(R)$ when $\cha (R)$ is odd, in Section \ref{son}.

\section{A Necessary condition for  $\mathfrak{sl}_n$}\label{sln}

We begin with some assertions for the Lie algebra $L$ over $R$.
Note that $R$ can be decomposed into a finite direct product of finite local rings, i.e., $R = R_1 \times R_2 \times \cdots \times R_t$, where $R_i$ is a finite local ring \cite[Theorem VI. 2]{M74}. We use this fact to observe that if $L$ can be decomposed into a direct sum of Lie algebras over $R_1, R_2, \ldots, R_t$, respectively, then $L$ has an $\ODAC$ if and only if each component of $L$ has an $\ODAC$. In particular, $\mathfrak{sl}_n(R)$ has an $\ODAC$ if and only if $\mathfrak{sl}_n(R_i)$ has an ODAC for all $i = 1, 2, \ldots, t$. Moreover, we show that if $\mathfrak{sl}_n(R)$ has an $\ODAC$, then $\cha (R)$ must be relatively prime to $n$.

 For each $i = 1, 2, \ldots, t$, let $L_i$ be a free $R_i$-module of finite rank. Suppose that all $L_i$'s have the same rank $n$. Then $L_1 \oplus L_2 \oplus \cdots \oplus L_t$ is a free $R$-module of rank $n$ by defining the scalar multiplication as follows: for all $r \in R$, $r = (r_1, r_2, \ldots, r_t)$ for some $r_i \in R_i$,
\begin{align*}
	r.(x_1, x_2, \ldots, x_t) := (r_1x_1, r_2x_2, \ldots, r_tx_t)
\end{align*}
where $(x_1, x_2, \ldots, x_t) \in L_1 \oplus L_2 \oplus \cdots \oplus L_t$.
Assume further that each $L_i$ is a Lie algebra over $R_i$, then we can naturally define the Lie bracket on $L_1 \oplus L_2 \oplus \cdots \oplus L_t$ by taking the componentwise bracket.
More precisely, for $(x_1, x_2, \ldots, x_t), (y_1, y_2, \ldots, y_t) \in L_1 \oplus L_2 \oplus \cdots \oplus L_t$, 
	\[
[(x_1, x_2, \ldots, x_t), (y_1, y_2, \ldots, y_t)] := ([x_1, y_1], [x_2, y_2], \ldots, [x_t, y_t]).
\]
Then $L_1 \oplus L_2 \oplus \cdots \oplus L_t$ is a Lie algebra over $R$.

From now on we fix $R, R_1, R_2, \ldots, R_t$ and $L_1, L_2, \ldots, L_t$ as above and for $1 \leq i \leq t$, let $\proj_i$ denote a projection from $L_1 \oplus L_2 \oplus \cdots \oplus L_t$ onto $L_i$.

\begin{lemma}\label{Cartocar}
Under the above setting, suppose that there is a Lie algebra isomorphism 
	\[
	\phi : L \rightarrow L_1 \oplus L_2 \oplus \cdots \oplus L_t.
	\]
	If $H$ is an abelian  Cartan subalgebra of $L$, then $\proj_i(\phi(H))$ is an abelian  Cartan subalgebra of $L_i$ for all $i = 1, 2, \ldots, t$.
\end{lemma}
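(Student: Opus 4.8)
The plan is to reduce everything to the statement that a projection of a Cartan subalgebra along a direct-product decomposition of a Lie algebra is again a Cartan subalgebra, and then transport this through the isomorphism $\phi$. Since $\phi$ is a Lie algebra isomorphism, $\phi(H)$ is an abelian Cartan subalgebra of $L_1 \oplus \cdots \oplus L_t$, so it suffices to prove the lemma in the case $L = L_1 \oplus \cdots \oplus L_t$ and $H = \phi(H)$, i.e. to show $\proj_i(H)$ is an abelian Cartan subalgebra of $L_i$. Moreover, by grouping factors it is enough to treat $t = 2$ and then iterate; so write $L = L_1 \oplus L_2$ and show $\proj_1(H)$ is an abelian Cartan subalgebra of $L_1$.

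The key structural observation I would use is that $\proj_1(H) \oplus \proj_2(H)$ is a subalgebra of $L$ containing $H$, and in fact $H \subseteq \proj_1(H)\oplus\proj_2(H)$ are both subalgebras with the same normalizer behaviour. First I would check abelianness: if $H$ is abelian, then for $x = (x_1,x_2), y=(y_1,y_2) \in H$ we have $([x_1,y_1],[x_2,y_2]) = 0$, so $[x_1,y_1] = 0$; since every element of $\proj_1(H)$ has this form, $\proj_1(H)$ is abelian. Next, nilpotency: $\proj_1(H)$ is a quotient (as a Lie algebra) of $H$ via $\proj_1$, hence nilpotent because $H$ is; here I would just note that the lower central series maps onto the lower central series. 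The remaining and main point is the self-normalizing property: I must show $N_{L_1}(\proj_1(H)) = \proj_1(H)$. Given $z_1 \in L_1$ normalizing $\proj_1(H)$, I would consider $z = (z_1, 0) \in L$ and show it normalizes $H$. For $h = (h_1,h_2) \in H$, $[z,h] = ([z_1,h_1],0)$, and $[z_1,h_1] \in \proj_1(H)$ since $z_1$ normalizes $\proj_1(H)$ and $h_1 \in \proj_1(H)$; but I need $([z_1,h_1],0) \in H$, not merely in $\proj_1(H)\oplus 0$.

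This last gap is the real obstacle, and to close it I expect to need the fact that $H$ is \emph{already} self-normalizing in the potentially larger algebra $\proj_1(H)\oplus\proj_2(H)$, together with a dimension/counting argument peculiar to the finite setting. Concretely: $H$ is a Cartan subalgebra of $L = L_1\oplus L_2$; a Cartan subalgebra of a direct sum is a direct sum of Cartan subalgebras of the summands when the summands are, say, ideals — and indeed $L_1\oplus 0$ and $0\oplus L_2$ are ideals of $L$. So I would argue that $H\cap(L_1\oplus 0)$ is a Cartan subalgebra of $L_1\oplus 0$: it is nilpotent as a subalgebra of $H$, and if $z_1$ normalizes it in $L_1$ then $(z_1,0)$ normalizes $H \cap (L_1\oplus 0)$; combined with $H$ being self-normalizing and the ideal structure one deduces $(z_1,0)\in H$, hence $z_1 \in H\cap(L_1\oplus0)$. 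Finally I would identify $\proj_1(H)$ with $H\cap(L_1\oplus 0)$: the inclusion $H\cap(L_1\oplus0)\subseteq \proj_1(H)$ is clear, and for the reverse I would use that $H = (H\cap(L_1\oplus0))\oplus(H\cap(0\oplus L_2))$ — which itself follows from $H$ being a Cartan subalgebra of the direct sum of the two ideals $L_1\oplus 0$ and $0\oplus L_2$ — so that $\proj_1(H) = H\cap(L_1\oplus 0)$. Thus the content is really the decomposition $H = \bigoplus_i (H\cap L_i)$ for a Cartan subalgebra $H$ relative to a decomposition into ideals, and once that is in hand the abelian and Cartan properties of each $\proj_i(H) = H\cap L_i$ follow by the elementary arguments above.
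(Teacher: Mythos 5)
Your proposal is correct in substance, but it takes a genuinely different route from the paper at the one step you rightly identify as the crux, namely showing $\proj_1(\phi(H))\times\{0\}\subseteq\phi(H)$. The paper gets this for free from the $R$-module structure: since $R=R_1\times\cdots\times R_t$, the idempotent $e_1=(1,0,\ldots,0)$ acts on the $R$-submodule $\phi(H)$ and sends $(h_1,h_2,\ldots,h_t)$ to $(h_1,0,\ldots,0)$, so $\phi(H)=\bigoplus_i\proj_i(\phi(H))$ in one line; abelianness and the transfer of self-normalization then run exactly as in your final paragraph. You instead invoke the Lie-theoretic fact that a Cartan subalgebra of a direct sum of ideals is the direct sum of its intersections with the ideals; this is true here, but you assert it rather than prove it, and in this paper's setting (Lie algebras over finite commutative rings, where the author is explicitly careful about which facts from the field case transfer) it needs a check. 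The check is short: $\proj_1(\phi(H))\oplus\proj_2(\phi(H))$ is an abelian (in particular nilpotent) subalgebra containing $\phi(H)$, because each projection is a homomorphic image of the abelian $H$ and the two lie in complementary ideals; and a Cartan subalgebra is maximal among nilpotent subalgebras, since in a nilpotent Lie algebra every proper subalgebra is properly contained in its normalizer --- an argument using only the lower central series, hence valid over any ring. With that lemma supplied your proof closes the gap you flagged, and no ``dimension/counting argument peculiar to the finite setting'' is needed. The trade-off: the paper's idempotent trick is shorter and purely module-theoretic, while your route is more general, applying to any decomposition of $L$ into ideals rather than only those induced by a decomposition of the ring $R$.
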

\begin{proof}
	We first note that for a subalgebra $A$ of $\phi(L)$, 
	\[
	A = \proj_1(A) \oplus \proj_2(A) \oplus \cdots \proj_t(A)
	\]
	and if $x \in \proj_i(A)$, then $(0, \ldots, 0, \underbrace{x}_{i\text{-th}}, 0, \ldots, 0) \in \phi(L)$.
	
	To prove the lemma, it suffices to assume that $t = 2$ and $i = 1$ since the similar arguments hold for the other cases. It is clear that $\proj_1(\phi(H))$ is an $R_1$-submodule of $L_1$. Let $x, y \in \proj_1(\phi(H))$. Using the scalar $(1, 0)$, we observe that $(x, 0)$ and $(y, 0)$ are in $\phi(H)$. Since $H$ is abelian, so is $\phi(H)$ and
	\[
	[x, y]=\proj_1 ([x,y],[0, 0]) = \proj_1 [(x, 0),(y, 0)] =  \proj_1 (0 , 0) = 0.
	\]
	Thus, $\proj_1(\phi(H))$ is an abelian subalgebra of $L_1$ and so it is nilpotent. 
	
	We show that $\proj_1(\phi(H))$ is a self-normalizer in $L_1$. For convenience, we denote  $H_i := \proj_i(\phi(H))$ for all $i = 1, 2$.
	Let $x \in N_{L_1}(H_1)$. Then $[x, H_1] \subseteq H_1$. For any $h \in H$, 
	\begin{align*}
		[(x, 0), \phi(h)] &= [(x, 0), (\proj_1(\phi(h)), \proj_2(\phi(h)) )] \\
		&= ([x, \proj_1(\phi(h))], [0, \proj_2(\phi(h))]) \\
		&= ([x, \proj_1(\phi(h))], 0) \in (H_1, H_2) = \phi(H).
	\end{align*}
	Then, $[(x, 0), \phi(H)] \subseteq \phi(H)$.
	Since $H$ is a self-normalizer in $L$, so is $\phi(H)$ in $L_1 \oplus L_2$. Thus, $(x, 0) \in \phi(H)$. Therefore, $N_{L_1}(H_1) = H_1$.
\end{proof}

From the above lemma, we can derive the following criteria for the Lie algebra $L$ over $R$ to admit an $\ODAC$.

\begin{thm}\label{ODtoOD}
 Under the assumption in Lemma \ref{Cartocar},
	$L$ has an $\ODAC$ if and only if each $L_i$ has an $\ODAC$.
\end{thm}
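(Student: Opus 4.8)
The plan is to reduce to a direct product and then do componentwise bookkeeping. First I would record that $\phi$, being a Lie algebra isomorphism of free $R$-modules, intertwines the adjoint representations ($\ad_{\phi(L)}\phi(x)=\phi\circ\ad_L(x)\circ\phi^{-1}$) and hence preserves the Killing form; so $L$ admits an $\ODAC$ if and only if $L_1\oplus\cdots\oplus L_t$ does, with the decompositions matched through $\phi$. It then suffices to treat $L=L_1\oplus\cdots\oplus L_t$. I would isolate two facts used repeatedly: (i) any $R$-submodule $A$ of $L_1\oplus\cdots\oplus L_t$ satisfies $A=\bigoplus_i\proj_i(A)$, and $x\in\proj_i(A)$ implies $(0,\dots,x,\dots,0)\in A$ (multiply by the idempotent $e_i$) --- the observation already opening the proof of Lemma \ref{Cartocar}; and (ii) the adjoint action on a direct product is componentwise, so $K_{L_1\oplus\cdots\oplus L_t}\big((x_i)_i,(y_i)_i\big)=\sum_i K_{L_i}(x_i,y_i)$, which is exactly where freeness of the modules is used.

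For the forward implication I would start from an $\ODAC$ $L_1\oplus\cdots\oplus L_t=H_0\oplus\cdots\oplus H_k$, fix $i$, and set $H_j^{(i)}:=\proj_i(H_j)$. Lemma \ref{Cartocar} already gives that each $H_j^{(i)}$ is an abelian Cartan subalgebra of $L_i$, so only two things remain. Directness: $L_i=\proj_i\big(\bigoplus_j H_j\big)=\sum_j H_j^{(i)}$, and a relation $\sum_j x_j=0$ with $x_j\in H_j^{(i)}$ lifts by (i) to $\sum_j(0,\dots,x_j,\dots,0)=0$ inside the direct sum $\bigoplus_j H_j$, forcing each $x_j=0$. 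Orthogonality: for $x\in H_j^{(i)}$ and $y\in H_{j'}^{(i)}$ with $j\ne j'$, the liftings $(0,\dots,x,\dots,0)\in H_j$ and $(0,\dots,y,\dots,0)\in H_{j'}$ are $K$-orthogonal, and (ii) collapses this to $K_{L_i}(x,y)=0$. Hence each $L_i$ inherits an $\ODAC$.

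For the converse I would, given $\ODAC$s $L_i=H_0^{(i)}\oplus\cdots\oplus H_k^{(i)}$ with a common number of parts, simply set $\widetilde H_j:=H_j^{(1)}\oplus\cdots\oplus H_j^{(t)}\subseteq L_1\oplus\cdots\oplus L_t$ and check it is an abelian Cartan subalgebra: it is abelian because the bracket is componentwise, hence nilpotent, and self-normalizing because the normalizer is likewise componentwise, $N(\widetilde H_j)=\bigoplus_i N_{L_i}(H_j^{(i)})=\bigoplus_i H_j^{(i)}=\widetilde H_j$. Pairwise orthogonality follows at once from (ii), and $\bigoplus_j\widetilde H_j=\bigoplus_i\bigoplus_j H_j^{(i)}=\bigoplus_i L_i$, so the $\widetilde H_j$ give an $\ODAC$; transporting back through $\phi$ finishes.

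The step I expect to be the real point is the clause ``with a common number of parts'' in the converse: a priori the $\ODAC$s of distinct $L_i$ could have different lengths, and one cannot pad with a zero subalgebra nor split an abelian Cartan subalgebra into two, so the construction of the $\widetilde H_j$ needs this alignment. For the algebras relevant to this paper it is automatic, since all abelian Cartan subalgebras of $\mathfrak{sl}_n$ (and of $\mathfrak{sp}_n$, $\mathfrak{so}_n$) over a finite commutative ring are free of a single fixed rank, whence the number of summands in any $\ODAC$ equals $\mathrm{rank}(L)$ divided by that rank; I would make this rank-uniformity explicit. Everything else is the routine componentwise behaviour of the bracket, the normalizer and the Killing form on a direct product, together with Lemma \ref{Cartocar}.
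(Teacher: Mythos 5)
Your forward direction and the Killing-form bookkeeping coincide in substance with the paper's argument: project each summand via Lemma \ref{Cartocar}, lift elements back with the idempotents of $R=R_1\times\cdots\times R_t$ to get directness, and use the componentwise behaviour of $\ad$ to see that the Killing form of $\phi(L)$ is computed slotwise (the paper records it as an $R$-valued tuple $(K_1(x_1,y_1),\ldots,K_t(x_t,y_t))$ rather than a sum, but the computation is the same), whence orthogonality of the projected components. The divergence is in the converse. The paper sets $k=\max_i k_i$ and pads the shorter $\ODAC$s with zero submodules, taking $H_j=\phi^{-1}(H_{1j},\ldots,H_{tj})$; it does not engage with the length mismatch beyond this padding. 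Your instinct that padding is problematic is sound: if $H_{ij}=0$ while $L_i\neq 0$, then the $i$-th slot of $N_{\phi(L)}\bigl((H_{1j},\ldots,H_{tj})\bigr)$ is $N_{L_i}(0)=L_i$, so the padded summand is not self-normalizing and hence not a Cartan subalgebra. You have therefore put your finger on a step that the paper's own proof glosses over (this does not affect the paper's later applications, which only use the forward implication).

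However, your replacement for the padding is itself only a sketch and does not prove the theorem as stated. The statement concerns arbitrary Lie algebras $L_i$ over the finite local rings $R_i$, not only $\mathfrak{sl}_n$, $\mathfrak{sp}_n$ and $\mathfrak{so}_n$, so an appeal to rank-uniformity of abelian Cartan subalgebras of those particular classical algebras lies outside the scope of the claim; and even for them the assertion is nontrivial over a finite local ring (one would at least need that the summands of an $\ODAC$, being direct summands of a free module over a local ring, are free, and then a reduction modulo the maximal ideal to pin the rank down --- none of which is carried out in your outline). As written, your converse either needs the extra hypothesis that the $\ODAC$s of the $L_i$ can be chosen with the same number of parts, or a genuine proof of the rank-uniformity you invoke. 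In short: the forward direction matches the paper and is complete; in the converse you have correctly identified a defect in the zero-padding device the paper uses, but your proposed repair is not yet a proof.
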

\begin{proof}
	Assume that $L = H_1 \oplus H_2 \oplus \cdots \oplus H_k$ is an $\ODAC$ of $L$. We only prove that $L_1$ has an $\ODAC$
	\[
	L_1 = \proj_1(\phi(H_1)) \oplus \proj_1(\phi(H_2)) \oplus \cdots \oplus \proj_1(\phi(H_k))
	\]
	and the similar arguments work for the other $L_i$'s by using suitable projection maps. By Lemma \ref{Cartocar}, each $\proj_1(\phi(H_j)) $ is  an abelian  Cartan subalgebra of $L_1$.
	Let $x_1 \in L_1$ and $x_0 \in L$ such that $\phi(x_0) = (x_1, 0, \ldots, 0)$. Due to the $\ODAC$ of $L$, we have 
	\[
	x_0 = x_{0, 1} + x_{0, 2} + \cdots + x_{0, k},
	\] 
	for some $x_{0, j} \in H_j$, and 
	\[
	x_1 = \proj_1(\phi(x_0)) = \proj_1(\phi(x_{0, 1})) +  \cdots + \proj_1(\phi(x_{0, k}))
	.\]
	So, $L_1 \subseteq \sum_{j = 1}^k \proj_1(\phi(H_j))$. On the other hand, it is clear that 
	\[
		L_1 \supseteq \sum_{j = 1}^k \proj_1(\phi(H_j)).
	\]
 Next, let  $j_0 \in \{ 1, 2, \ldots, k\}$ and $x_1 \in \proj_1(\phi(H_{j_0})) \cap \sum_{j \neq j_0} \proj_1(\phi(H_j))$. Then there exist $(h_2, \ldots, h_t),  (h_2', \ldots, h_t') \in \sum_{i = 2}^t L_i$ such that 
	\[
	(x_1, h_2, \ldots, h_t) \in \phi(H_{j_0})
	\text{ and } 
	(x_1, h_2', \ldots, h_t')  \in \sum_{j \neq j_0}\phi(H_j).
	\]
	Let $r = (1, 0, \ldots, 0) \in R_1 \times R_2 \times \ldots \times R_t$. So, 
	\[
	(x_1, 0, \ldots, 0) = r (x_1, h_2, \ldots, h_t) = r (x_1, h_2', \ldots, h_t') \in \phi(H_{j_0}) \cap \sum_{j \neq j_0}\phi(H_j) = \{ 0 \}
	\]
	and hence $x_1 = 0$. So, the sum is direct.
Let $K_i : L_i \times L_i \rightarrow R_i$ be the Killing form. We show that the Killing form $K_{\phi(L)}$ of $\phi(L)$ is equal to 
\[
K_{\phi(L)}(\phi(x), \phi(y)) = (K_1(x_1, y_1), K_2(x_2, y_2), \cdots, K_t(x_t, y_t))
\]
where $\phi(x) = (x_1, x_2, \ldots, x_t)$ and $\phi(y) = (y_1, y_2, \ldots, y_t)$
for all $x, y \in L$. Fix a basis $\{v_1, v_2, \ldots, v_n\}$ for $L$ and a basis $\{\phi(v_1), \phi(v_2), \ldots, \phi(v_n)\}$ for $\phi(L)$. For each $i = 1, 2, \ldots, t$, let $v_1^{(i)} = \proj_i(\phi(v_1)), v_2^{(i)} = \proj_i(\phi(v_2)), \ldots, v_n^{(i)} = \proj_i(\phi(v_n))$. Then $\{v_1^{(i)}, v_2^{(i)}, \ldots, v_n^{(i)}\}$ is a basis for $L_i$. We have
\[
 [\phi(x), \phi(v_i)] = \displaystyle \sum_{j=1}^{n}a_{ij}\phi(v_j)
\]
for some $a_{ij} \in R$ and $a_{ij} = (a_{ij}^{(1)}, a_{ij}^{(2)}, \ldots, a_{ij}^{(t)}) \in R_1 \times R_2 \times \cdots \times R_t$. Moreover, 
\[
([x_1, v_i^{(1)}], [x_2, v_i^{(2)}], \ldots, [x_t, v_i^{(t)}]) = \Big(\sum_{j = 1}^n a_{ij}^{(1)}v_j^{(1)}, \sum_{j = 1}^n a_{ij}^{(2)}v_j^{(2)}, \ldots, \sum_{j = 1}^n a_{ij}^{(t)}v_j^{(t)} \Big).
\]
Then $\ad \phi(x) = (a_{ij})$ and $\ad x_l = (a_{ij}^{(l)})$ for all $l = 1, 2, \ldots, t$. By the similar arguments and replace $a_{(ij)}$ with $b_{(ij)}$, we have $\ad \phi(y) = (b_{ij})$ and $\ad y_l = (b_{ij}^{(l)})$ for all $l = 1, 2, \ldots, t$. So, we find that 
\begin{align*}
K_{\phi(L)}(\phi(x), \phi(y)) &= \tr(\ad \phi(x) \ad \phi(y)) \\
							  &= \tr ((a_{ij})(b_{ij})) \\
							  &= \sum_{i = 1}^n \sum_{s = 1}^n a_{is}b_{si} \\
							  &= \sum_{i = 1}^n \sum_{s = 1}^n (a_{is}^{(1)}, a_{is}^{(2)}, \ldots, a_{is}^{(t)})(b_{si}^{(1)}, b_{si}^{(2)}, \ldots, b_{si}^{(t)}) \\
							  &= \sum_{i = 1}^n \sum_{s = 1}^n (a_{is}^{(1)}b_{si}^{(1)}, a_{is}^{(2)}b_{si}^{(2)}, \ldots, a_{is}^{(t)} b_{si}^{(t)})\\
							  &= \Big(\sum_{i = 1}^n \sum_{s = 1}^na_{is}^{(1)}b_{si}^{(1)}, \sum_{i = 1}^n \sum_{s = 1}^n a_{is}^{(2)}b_{si}^{(2)}, \ldots, \sum_{i = 1}^n \sum_{s = 1}^na_{is}^{(t)} b_{si}^{(t)} \Big)\\
							  &= (\tr((a_{ij}^{(1)})(b_{ij}^{(1)})), \tr((a_{ij}^{(2)})(b_{ij}^{(2)})), \ldots, \tr((a_{ij}^{(t)})(b_{ij}^{(t)}))) \\
							  &= (\tr (\ad x_1 \ad y_1), \tr (\ad x_2 \ad y_2), \ldots, \tr (\ad x_t \ad y_t)) \\
							  &= (K_1(x_1, y_1), K_2(x_2, y_2), \cdots, K_t(x_t, y_t))
\end{align*}
Next, we prove that $\proj_1(\phi(H_{j_1}))$ is orthogonal to  $\proj_1(\phi(H_{j_2}))$ with respect to the Killing from $K_1$ if $j_1 \neq j_2$. Let $x_1 \in \proj_1(\phi(H_{j_1}))$ and $y_1  \in \proj_1(\phi(H_{j_2}))$. Then $(x_1 , 0, \ldots, 0) \in \phi(H_{j_1})$ and $(y_1 , 0, \ldots, 0) \in \phi(H_{j_2})$ are orthogonal to each other. Moreover, 
\[
(K_1(x_1 , y_1), K_2(0, 0), \ldots, K_t(0, 0)) = K_{\phi(L)}((x_1 , 0, \ldots, 0), (y_1 , 0, \ldots, 0)) = 0.
\] Therefore, $K_1(x_1 , y_1 ) = 0$.

Conversely, we suppose that each $L_i, i = 1, 2, \ldots, t$, has an $\ODAC$ with $k_i$ components. Let $k = \max \{k_i : i = 1, 2, \ldots, t \}$. For each $i = 1, 2, \ldots, t$ and $j = 1, 2, \ldots, k$, let $H_{ij}$ be the $j$th component of $\ODAC$ of $L_i$ if $j \leq k_i$ and a zero submodule if otherwise. Then $L$ has an $\ODAC$ 
	\[
	L = H_1 \oplus H_2 \oplus \cdots \oplus H_k,
	\]
	where $H_j = \phi^{-1}(H_{1j}, H_{2j}, \ldots, H_{tj})$.
\end{proof}

We relate the decomposition of a finite commutative ring to an $\ODAC$ of a linear Lie algebra.
A Lie algebra isomorphism from $\mathfrak{gl}_n(R)$ to $\mathfrak{gl}_n(R_1) \oplus \mathfrak{gl}_n(R_2) \oplus \cdots \oplus \mathfrak{gl}_n(R_t)$ can be defined as follows. Note that the multiplication 
\[
(A^{(1)}, A^{(2)}, \ldots, A^{(t)}) \cdot (B^{(1)}, B^{(2)}, \ldots, B^{(t)}) = (A^{(1)}B^{(1)}, A^{(2)}B^{(2)}, \ldots, A^{(t)}B^{(t)}),
\]
defines an $R$-algebra structure on $\mathfrak{gl}_n(R_1) \oplus \mathfrak{gl}_n(R_2) \oplus \cdots \oplus \mathfrak{gl}_n(R_t)$. Consequently, we can define a bracket $[\cdot, \cdot]$ on it to be the componentwise commutator. It follows that this $R$-algebra is a Lie algebra over $R$. For each $a \in R$, we can write  $a = (a^{(1)}, a^{(2)}, \ldots, a^{(t)})$ uniquely. Define 
\begin{align*}
	\phi : &\mathfrak{gl}_n(R) \longrightarrow \mathfrak{gl}_n(R_1) \oplus \mathfrak{gl}_n(R_2) \oplus \cdots \oplus \mathfrak{gl}_n(R_t) \\
	& \ \ (a_{ij}) \ \longmapsto ((a_{ij}^{(1)}), (a_{ij}^{(2)}), \ldots, (a_{ij}^{(t)}))
\end{align*}
for all $(a_{ij}) \in \mathfrak{gl}_n(R)$. Clearly, $\phi$ is an $R$-module isomorphism. 

Let $A=(a_{ij}), B=(b_{ij})\in \mathfrak{gl}_n(R)$. Then 
\begin{align*}
	\phi(AB) &= \phi((a_{ij})(b_{ij})) \\
	& = \phi((\sum\limits_l a_{il}b_{lj})) \\ &= ((\sum\limits_l a_{il}^{(1)}b_{lj}^{(1)}), (\sum\limits_l a_{il}^{(2)}b_{lj}^{(2)}), \ldots, (\sum\limits_l a_{il}^{(t)}b_{lj}^{(t)})) \\
	&= ((a_{ij}^{(1)})(b_{ij}^{(1)}), (a_{ij}^{(2)})(b_{ij}^{(2)}), \ldots, (a_{ij}^{(t)})(b_{ij}^{(t)})) \\
	& = ((a_{ij}^{(1)}), (a_{ij}^{(2)}), \ldots, (a_{ij}^{(t)}))\cdot ((b_{ij}^{(1)}), (b_{ij}^{(2)}), \ldots, (b_{ij}^{(t)})) \\
	& = \phi((a_{ij})) \phi((b_{ij})) = \phi(A) \phi(B).
\end{align*}
So, $\phi([A, B]) = [\phi(A), \phi(B)]$.
Thus, $\phi$ is a Lie algebra isomorphism and by Theorem \ref{ODtoOD}, we have the following theorem. 

\begin{thm}\label{prodgl}
	Under the above setting, we have the following:
	\begin{enumerate}[(i)]
		\item There is a Lie algebra (over $R$) isomorphism 
		\[
		\phi : \mathfrak{gl}_n(R) \longrightarrow \mathfrak{gl}_n(R_1) \oplus \mathfrak{gl}_n(R_2) \oplus \cdots \oplus \mathfrak{gl}_n(R_t).
		\]
		\item If $\mathfrak{g}$ is a Lie subalgebra of $\mathfrak{gl}_n(R)$, then 
		\[
		\mathfrak{g}\cong \proj_1(\phi(\mathfrak{g})) \oplus \proj_2(\phi(\mathfrak{g})) \oplus \cdots \oplus \proj_t(\phi(\mathfrak{g})).
		\]	
		Moreover,  $\mathfrak{g}$ has an $\ODAC$ if and only if  $\proj_i(\phi(\mathfrak{g}))$ has an $\ODAC$ for all $i = 1, 2, \ldots, t$.
	\end{enumerate}
\end{thm}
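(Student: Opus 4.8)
The plan is to read part (i) directly off the computation just completed: the explicit map $\phi$ on $\mathfrak{gl}_n(R)$ is visibly an $R$-module bijection, and the displayed chain $\phi(AB)=\phi(A)\phi(B)$ together with $R$-bilinearity of the bracket gives $\phi([A,B])=[\phi(A),\phi(B)]$, so $\phi$ is an isomorphism of Lie algebras over $R$. Nothing further is needed for (i).

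For (ii) I would first restrict: since $\phi$ is injective, $\phi|_{\mathfrak g}\colon \mathfrak g \to \phi(\mathfrak g)$ is a Lie algebra isomorphism onto its image, and $\phi(\mathfrak g)$ is a Lie subalgebra of $\mathfrak{gl}_n(R_1)\oplus\cdots\oplus\mathfrak{gl}_n(R_t)$. The key structural point is then exactly the one used at the start of the proof of Lemma~\ref{Cartocar}: $\phi(\mathfrak g)$ is an $R$-submodule of the direct sum, and $R=R_1\times\cdots\times R_t$ carries the orthogonal idempotents $e_i=(0,\dots,1,\dots,0)$, so $e_i\cdot\phi(\mathfrak g)\subseteq\phi(\mathfrak g)$; hence for each $x\in\phi(\mathfrak g)$ the element $(0,\dots,0,\proj_i(x),0,\dots,0)$ lies in $\phi(\mathfrak g)$, and therefore
\[
\phi(\mathfrak g)=\proj_1(\phi(\mathfrak g))\oplus\proj_2(\phi(\mathfrak g))\oplus\cdots\oplus\proj_t(\phi(\mathfrak g)).
\]
Each $\proj_i(\phi(\mathfrak g))$ is a Lie subalgebra of $\mathfrak{gl}_n(R_i)$, being the image of $\phi(\mathfrak g)$ under the Lie algebra projection $\mathfrak{gl}_n(R_1)\oplus\cdots\oplus\mathfrak{gl}_n(R_t)\to\mathfrak{gl}_n(R_i)$, and combining this decomposition with $\mathfrak g\cong\phi(\mathfrak g)$ yields the asserted isomorphism $\mathfrak g\cong\proj_1(\phi(\mathfrak g))\oplus\cdots\oplus\proj_t(\phi(\mathfrak g))$.

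For the ``moreover'' clause I would invoke Theorem~\ref{ODtoOD} with $L:=\mathfrak g$, $L_i:=\proj_i(\phi(\mathfrak g))$, and the isomorphism $\phi|_{\mathfrak g}$: once the hypotheses of Lemma~\ref{Cartocar} are checked for this data, Theorem~\ref{ODtoOD} gives immediately that $\mathfrak g$ has an $\ODAC$ if and only if every $\proj_i(\phi(\mathfrak g))$ does. Those hypotheses amount to $\mathfrak g$ being a Lie algebra over $R$ that is free of finite rank — which is exactly what makes the Killing form, hence the notion of $\ODAC$, meaningful, so it is in force throughout — together with each $L_i$ being free over $R_i$ of that common rank; the latter follows since $\proj_i(\phi(\mathfrak g))=e_i\cdot\phi(\mathfrak g)$ and $\phi(\mathfrak g)\cong\mathfrak g$ is free over $R=R_1\times\cdots\times R_t$, so its $e_i$-component is free of the same rank over $R_i$.

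The bracket- and module-bookkeeping is routine. The one place to be careful — the main obstacle — is the passage from ``$\phi(\mathfrak g)$ is stable under the $R$-action'' to the internal direct-sum decomposition displayed above and to the freeness of the summands, i.e. ensuring that the idempotent (Chinese remainder) structure of $R$ is applied correctly and that the resulting triple $(\mathfrak g,\,\{\proj_i(\phi(\mathfrak g))\},\,\phi|_{\mathfrak g})$ genuinely satisfies the standing assumptions of Lemma~\ref{Cartocar} and Theorem~\ref{ODtoOD}. Everything else is a direct appeal to those two results.
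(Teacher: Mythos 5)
Your proposal is correct and follows essentially the same route as the paper: part (i) is read off the explicit computation $\phi(AB)=\phi(A)\phi(B)$, and part (ii) is obtained by decomposing $\phi(\mathfrak g)$ into its projections via the idempotent structure of $R$ (the observation already used at the start of the proof of Lemma~\ref{Cartocar}) and then invoking Theorem~\ref{ODtoOD}. Your explicit check that $\mathfrak g$ and the summands $\proj_i(\phi(\mathfrak g))=e_i\cdot\phi(\mathfrak g)$ are free of the required common rank is a point the paper leaves implicit in its standing assumptions, so it only adds care, not a different argument.
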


We now consider the special linear Lie algebra over $R$.  By Theorem \ref{prodgl}, 
\begin{align*}\label{Projsl}
\mathfrak{sl}_n(R) \cong \proj_1(\phi(\mathfrak{sl}_n(R))) \oplus \proj_2(\phi(\mathfrak{sl}_n(R))) \oplus \cdots \oplus \proj_t(\phi(\mathfrak{sl}_n(R))).
\end{align*}
It is straightforward to verify that  $\proj_i(\phi(\mathfrak{sl}_n(R))) = \mathfrak{sl}_n(R_i)$ for all $i = 1, 2, \ldots, t$. Therefore, we have:

\begin{thm}\label{sufnes}
	$\mathfrak{sl}_n(R)$ has an $\ODAC$ if and only if $\mathfrak{sl}_n(R_i)$ has an $\ODAC$ for all $i = 1, 2, \ldots, t$.
\end{thm}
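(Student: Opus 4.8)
The plan is to derive Theorem~\ref{sufnes} as an immediate corollary of Theorem~\ref{prodgl}(ii) applied to the Lie subalgebra $\mathfrak{g} = \mathfrak{sl}_n(R)$ of $\mathfrak{gl}_n(R)$. The only substantive thing left to check is the identification $\proj_i(\phi(\mathfrak{sl}_n(R))) = \mathfrak{sl}_n(R_i)$ for each $i$; once this is established, Theorem~\ref{prodgl}(ii) says that $\mathfrak{sl}_n(R)$ has an $\ODAC$ if and only if each $\proj_i(\phi(\mathfrak{sl}_n(R))) = \mathfrak{sl}_n(R_i)$ does, which is exactly the claim.

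To verify $\proj_i(\phi(\mathfrak{sl}_n(R))) = \mathfrak{sl}_n(R_i)$, I would first recall that under the isomorphism $\phi$ of Theorem~\ref{prodgl}(i), a matrix $(a_{jk}) \in \mathfrak{gl}_n(R)$ is sent to $((a_{jk}^{(1)}), \ldots, (a_{jk}^{(t)}))$, so $\proj_i(\phi((a_{jk}))) = (a_{jk}^{(i)}) \in \mathfrak{gl}_n(R_i)$. The trace commutes with the component-projection $R \to R_i$, $a \mapsto a^{(i)}$, since this projection is a ring homomorphism; hence $\Tr((a_{jk}^{(i)})) = (\Tr((a_{jk})))^{(i)}$. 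For the inclusion $\proj_i(\phi(\mathfrak{sl}_n(R))) \subseteq \mathfrak{sl}_n(R_i)$, take $A = (a_{jk}) \in \mathfrak{sl}_n(R)$, i.e. $\Tr(A) = 0$ in $R$; then $\Tr(\proj_i(\phi(A))) = 0^{(i)} = 0$ in $R_i$, so $\proj_i(\phi(A)) \in \mathfrak{sl}_n(R_i)$. For the reverse inclusion, given $B = (b_{jk}) \in \mathfrak{sl}_n(R_i)$, lift it to the matrix $A \in \mathfrak{gl}_n(R)$ whose $(j,k)$ entry is the element of $R$ with $i$-th component $b_{jk}$ and all other components $0$; then $\Tr(A)$ has $i$-th component $\Tr(B) = 0$ and all other components $0$, so $\Tr(A) = 0$ and $A \in \mathfrak{sl}_n(R)$, with $\proj_i(\phi(A)) = B$.

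I do not anticipate a real obstacle here: the argument is a routine bookkeeping check that the trace-zero condition is preserved and reflected componentwise under the canonical decomposition $R = R_1 \times \cdots \times R_t$, combined with the surjectivity of each coordinate projection $R \twoheadrightarrow R_i$. If anything requires a moment's care, it is only the observation that $\phi$ restricts to an isomorphism of Lie subalgebras $\mathfrak{sl}_n(R) \xrightarrow{\sim} \mathfrak{sl}_n(R_1) \oplus \cdots \oplus \mathfrak{sl}_n(R_t)$ (rather than merely a linear map), but this is inherited from the fact that $\phi$ is already a Lie algebra isomorphism on $\mathfrak{gl}_n(R)$ by Theorem~\ref{prodgl}(i) and $\mathfrak{sl}_n$ is a Lie ideal. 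Thus the proof amounts to: invoke Theorem~\ref{prodgl}(ii) with $\mathfrak{g} = \mathfrak{sl}_n(R)$, and cite the (straightforward) identification of the projections with $\mathfrak{sl}_n(R_i)$ noted just above the statement.
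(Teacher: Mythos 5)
Your proposal is correct and follows essentially the same route as the paper: the paper also deduces the theorem directly from Theorem~\ref{prodgl}(ii) applied to $\mathfrak{g} = \mathfrak{sl}_n(R)$, noting (without detail) that $\proj_i(\phi(\mathfrak{sl}_n(R))) = \mathfrak{sl}_n(R_i)$, which is exactly the identification you verify explicitly via the componentwise trace argument.
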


Using the above theorem, we obtain a necessary condition on the ring $R$ and $n$ for the existence of an $\ODAC$ of $\mathfrak{sl}_n(R)$.

\begin{thm}\label{onchar}
	If $\mathfrak{sl}_n(R)$ admits an $\ODAC$, then $\cha(R)$ is relatively prime to $n$.
\end{thm}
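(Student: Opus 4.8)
The plan is to reduce to a local ring and then exploit the centre of $\mathfrak{sl}_n(R)$. If $n=1$ the algebra is zero and $\gcd(\cha(R),1)=1$, so assume $n\ge 2$. By Theorem \ref{sufnes} it suffices to treat the case in which $R$ is a finite local ring; then $R$ has prime power order, $\cha(R)=p^s$ for a prime $p$, and the claim becomes $p\nmid n$. I would argue by contradiction: assume $p\mid n$ and show that $\mathfrak{sl}_n(R)$ then has a nonzero centre, which is incompatible with the existence of an $\ODAC$.

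First I would record the elementary description
\[
Z(\mathfrak{sl}_n(R))=\{\,c\,I_n : c\in R,\ nc=0\ \text{in}\ R\,\},
\]
which holds over any commutative ring: the off-diagonal elementary matrices $E_{ij}$ ($i\ne j$) all lie in $\mathfrak{sl}_n(R)$, and comparing entries in the equations $[E_{ij},X]=0$ forces $X$ to be diagonal with all diagonal entries equal, i.e. $X=cI_n$; the condition $X\in\mathfrak{sl}_n(R)$ then reads $nc=0$. Now if $p\mid n$, the element $c:=p^{s-1}\cdot 1_R$ is nonzero (since $0<p^{s-1}<p^s=\cha(R)$) and satisfies $nc=0$, so $Z(\mathfrak{sl}_n(R))\ne 0$.

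Next I would use two standard facts, both valid in the present generality. First, $\mathfrak{sl}_n(R)$ is not abelian for $n\ge 2$, since $[E_{12},E_{21}]=E_{11}-E_{22}\ne 0$; hence any $\ODAC$ $\mathfrak{sl}_n(R)=H_0\oplus H_1\oplus\cdots\oplus H_k$ has at least two summands, i.e. $k\ge 1$. Second, every Cartan subalgebra $H$ of a Lie algebra $L$ contains the centre $Z$ of $L$: for $z\in Z$ one has $[z,H]=0\subseteq H$, so $z\in N_L(H)=H$. Therefore $Z(\mathfrak{sl}_n(R))\subseteq H_0\cap H_1=\{0\}$, contradicting $Z(\mathfrak{sl}_n(R))\ne 0$. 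This forces $p\nmid n$; and since in the general case $\cha(R)=\lcm_i\cha(R_i)$ with each $\cha(R_i)$ coprime to $n$, we conclude $\gcd(\cha(R),n)=1$.

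There is no genuinely hard step here; the work is entirely in making sure that each ingredient survives the passage from a field to a finite commutative ring. In particular, one must avoid any appeal to semisimplicity or to nondegeneracy of the Killing form — the form $K(X,Y)=2n\,\Tr(XY)$ on $\mathfrak{sl}_n(R)$ may well be degenerate (or even identically zero) over the rings of interest — and instead lean on the purely definitional facts above. As an alternative that bypasses the local reduction, one can argue directly that the $\ODAC$ forces $nc=0\Rightarrow c=0$, i.e. $n\cdot 1_R$ is a non-zero-divisor and hence, $R$ being finite, a unit of $R$; and $n\cdot 1_R\in R^{\times}$ is equivalent to $\gcd(\cha(R),n)=1$ by Bézout's identity (with the failure direction witnessed, as above, by $(m/d)(n\cdot 1_R)=0$ when $d=\gcd(\cha(R),n)>1$ and $m=\cha(R)$).
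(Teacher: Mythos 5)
Your proposal is correct and follows essentially the same route as the paper: reduce to a finite local factor via Theorem \ref{sufnes}, exhibit a nonzero central scalar matrix in $\mathfrak{sl}_n$ over that factor when the residue prime divides $n$, and note that it must lie in every abelian Cartan subalgebra while the algebra is non-abelian, contradicting the directness of an $\ODAC$. The only cosmetic difference is that your single element $p^{s-1}I_n$ (together with the explicit check that Cartan subalgebras contain the centre) replaces the paper's two cases using $I_n$ and $p^{a-b}I_n$.
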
  
\begin{proof}
	Suppose that $\cha(R)$ is not relatively prime to $n$. Then 
	\[
	\cha(R) = p^a p_1^{s_1}p_2^{s_2} \cdots p_l^{s_l}
	\] and 
	\[
	n = p^b p_1^{t_1}p_2^{t_2}\cdots p_{l}^{t_{l}}
	\] where $p$ and $p_i$'s are all distinct prime integers and $a, s_1, \ldots, s_l, b, t_1, \ldots, t_{l}$ are non negative integers. Since $R = R_1 \times R_2 \times \cdots \times R_t$ is a finite product of finite local rings and each $R_i$ has characteristic a power of a prime integer, there exists $i_0 \in \{ 1, 2, \ldots, t \}$ such that $\cha(R_{i_0}) = p^{a}$. Consider $\mathfrak{sl}_n(R_{i_0})$; we have two distinct cases. \\
	{\bf Case 1:} $b \geq a$.  Then $n$ is divisible by $p^{a}$  and so the trace of the identity matrix $I_n$ is $0$. Thus, $\mathfrak{sl}_n(R_{i_0})$ contains $I_n$ and so does every abelian Cartan subalgebra. Thus, any two abelian Cartan subalgebras have a nontrivial intersection. Since $\mathfrak{sl}_n(R_{i_0})$ is not abelian, it does not have an $\ODAC$. \\
	{\bf Case 2:} $b < a$. Then $p^{a - b}I_n$ is an element of $\mathfrak{sl}_n(R_{i_0})$. By the similar reason to the case 1,  $\mathfrak{sl}_n(R_{i_0})$ does not admit an $\ODAC$. \\
	Hence, by Theorem \ref{sufnes}, $\mathfrak{sl}_n(R)$ does not have an $\ODAC$.
\end{proof}

By the above theorem, we have the following example.

\begin{ex}\label{sl6ne} 
	$\mathfrak{sl}_6(R)$ does not have an $\ODAC$ if $R$ has one of the following rings as its summand:  $\mathbb{F}_{2^m}, \mathbb{F}_{3^m}, \mathbb{Z}_{2^m}$ and $\mathbb{Z}_{3^m}$. 
\end{ex}

\section{$\ODAC$ of $\mathfrak{sp}_{2^{m+1}}$}\label{spn}

In the complex number case, the OD problem of Lie algebra of type $C$ has the same difficulty as type $A$. However, in the special case of the Lie algebra of type $C_{2^m}$, it is manageable because  $\mathfrak{sp}_{2^{m + 1}}(\mathbb{C})$ is a subalgebra of $\mathfrak{sl}_{2^{m + 1}}(\mathbb{C})$ and an OD of this Lie algebra is constructible \cite[Chapter 1]{KT94}. Here, we consider the $\ODAC$ problem of $\mathfrak{sp}_{2^{m + 1}}(R)$ when the characteristic of $R$ is odd. Note that $-1 \in R$ is the primitive square root of unity and $-2$ is a unit in $R$. By Theorem 3.1 in \cite{ฺSZ18}, an $\ODAC$ of $\mathfrak{sl}_{2^{m+1}}(R)$ exists. Restricting this $\ODAC$ of $\mathfrak{sl}_{2^{m+1}}(R)$, we can show that  $\mathfrak{sp}_{2^{m+1}}(R)$ also has an $\ODAC$. Note that the Killing form for $\mathfrak{sp}_n(R)$ is equal to
\[
K(A, B) = (4n + 2) \tr (AB)
\]
for all $A, B \in \mathfrak{sp}_{2n}(R)$.

We recall that
\[
\mathfrak{sp}_{2^{m + 1}}(R) = \{ X \in M_{2^{m + 1}}(R) : X K + K X^T = 0  \} ,
\]
where $K = \begin{pmatrix}
0 & I_{2^m} \\
-I_{2^m} & 0
\end{pmatrix}$. Let 
\[
D = \begin{pmatrix}
1 & 0 \\
0 & -1
\end{pmatrix} \text{ and }
P = \begin{pmatrix}
0 & 1 \\
1 & 0
\end{pmatrix}.
\]

Let $W = \mathbb{F}_{2^{m + 1}} \oplus \mathbb{F}_{2^{m + 1}}$ be a $2(m+1)$-dimensional vector space over $\mathbb{F}_2$ equipped with a symplectic form $\braket{\cdot, \cdot} : W \times W \rightarrow \mathbb{F}_2$ defined by the field trace\footnote[1]{The field trace of $\alpha \in \mathbb{F}_{2^{m+1}}$ is defined to be the sum of all Galois conjugates of $\alpha$, i.e. 
	\[
	\Tr _{\mathbb{F}_{2^{m+1}}/\mathbb{F}_2}(\alpha) = \alpha + \alpha^2 + \cdots+ \alpha^{2^m}.
	\]
	 } as follows: for any elements $\vec{w} = (\alpha; \beta), \vec{w}' = (\alpha'; \beta') \in W$,
\[
	\langle \vec{w}, \vec{w}'  \rangle = Tr _{\mathbb{F}_{2^{m+1}}/\mathbb{F}_2}(\alpha \beta' - \alpha' \beta).
\]
Then, by Corollary 3.3 of \cite{ZW02}, $W$ possesses a symplectic basis $\mathcal{B} = \{ \vec{e}_1, \ldots, \vec{e}_{m+1}, \vec{f}_1,\ldots, \vec{f}_{m+1} \}$ where $\{\vec{e}_1, \ldots, \vec{e}_{m+1} \}$ and $\{\vec{f}_1,\ldots, \vec{f}_{m+1} \}$ span the first and the second factor, respectively, such that
\[
\langle \vec{w}, \vec{w}' \rangle = \sum_{i = 1}^{m+1}(a_ib'_i - a'_ib_i),
\]
where $\vec{w} = \sum_{i = 1}^{m+1}(a_i\vec{e}_i + b_i\vec{f}_i)$ and $\vec{w}' = \sum_{i = 1}^{m+1}(a'_i\vec{e}_i + b'_i\vec{f}_i)$. With the basis $\mathcal{B}$, write each vector $\vec{w} \in W$ as
\[
\vec{w} = (a_1, \ldots, a_{m+1}; b_1, \ldots, b_{m+1})
\]
and associate it with a matrix
\[
\mathcal{J}_{\vec{w}} = J_{(a_1, b_1)} \otimes J_{(a_2, b_2)} \otimes \cdots \otimes J_{(a_{m+1}, b_{m+1})},
\]
 where $J_{(a, b)} = D^a P^b$ and $\otimes$ is a Kronecker product \footnote[2]{The Kronecker product of an $m \times n$ matrix $A = (a_{ij})$ and a $p \times q$ matrix $B$ is defined to be the $mp \times nq$ block matrix:
$
 A \otimes B = 
 \begin{bmatrix}
 a_{11}B & \cdots & a_{1n}B \\
  \vdots & \ddots &  \vdots \\
 a_{m1}B & \cdots & a_{mn}B
 \end{bmatrix}.
$}. Moreover, we define 
\[
q(\vec{w}) := \sum_{i=1}^{m+1}a_i b_i + (a_1 + b_1).
\]
Then the above symplectic form  is equal to
\[
\braket{\vec{w}, \vec{w}'} = q(\vec{w}) + q(\vec{w}') + q(\vec{w} + \vec{w}')
\]
for all $\vec{w}, \vec{w}' \in W$ and $(W, q)$ is a nondegenerate quadratic space with Witt index $m$ (Proposition 1.5.42 in \cite{BHR13}). We note that $(W, \braket{\cdot, \cdot})$ is a symplectic space with maximum totally isotropic subspaces of dimension $m + 1$.

Let $Q = \{ \vec{w} \in W : q(\vec{w}) = 1 \}$. We will describe a special basis of $\mathfrak{sp}_{2^{m+1}}(R)$ by using $Q$ in the next theorem. This special basis will be used for the construction of an $\ODAC$ of $\mathfrak{sp}_{2^{m+1}}(R)$.

\begin{thm}\label{basisofC}
	The Lie algebra $\mathfrak{sp}_{2^{m + 1}}(R)$ has $\{ \mathcal{J}_{\vec{w}} : \vec{w} \in Q \}$ as a basis.
\end{thm}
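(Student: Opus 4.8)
The plan is to show that the set $\{\mathcal{J}_{\vec{w}} : \vec{w} \in Q\}$ is both contained in $\mathfrak{sp}_{2^{m+1}}(R)$ and is an $R$-basis of a free module of the correct rank. First I would recall that the matrices $\mathcal{J}_{\vec{w}}$, being Kronecker products of the matrices $D^aP^b$ with $D^2 = P^2 = I_2$, form (up to sign) the generalized Pauli group on $2^{m+1}$ coordinates; the collection $\{\mathcal{J}_{\vec{w}} : \vec{w} \in W\}$ is linearly independent over $R$ because the ordinary trace pairing is non-degenerate on it ($\tr(\mathcal{J}_{\vec{w}}\mathcal{J}_{\vec{w}'}) = \pm 2^{m+1}\delta_{\vec{w},\vec{w}'}$ and $2$ is a unit in $R$ since $\cha(R)$ is odd), so it is a basis of $\mathfrak{gl}_{2^{m+1}}(R)$, which has rank $2^{2(m+1)} = |W|$. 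It then suffices to verify that $\mathcal{J}_{\vec{w}} \in \mathfrak{sp}_{2^{m+1}}(R)$ precisely when $q(\vec{w}) = 1$, since $|Q|$ has the right cardinality: $(W,q)$ is a nondegenerate quadratic space of dimension $2(m+1)$ with Witt index $m$, i.e. of minus type, so $|Q| = |\{q = 1\}| = 2^{2m+1} + 2^m = \dim_R \mathfrak{sp}_{2^{m+1}}(R)$.

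The core computation is therefore the membership criterion. The form $K$ defining $\mathfrak{sp}_{2^{m+1}}$ is itself, up to reindexing, a Kronecker product: conjugating the block form $\left(\begin{smallmatrix} 0 & I \\ -I & 0\end{smallmatrix}\right)$ appropriately, one sees $K$ is (a signed version of) $S^{\otimes(m+1)}$ where $S = \left(\begin{smallmatrix} 0 & 1 \\ -1 & 0\end{smallmatrix}\right)$, or more precisely a Kronecker product built from $S$ and $I_2$'s. The defining condition $XK + KX^T = 0$ is equivalent to $X^T = -KXK^{-1}$, i.e. $X$ is fixed by the order-two antiautomorphism $X \mapsto -K(X^T)K^{-1}$. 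Applying this to $X = \mathcal{J}_{\vec{w}} = \bigotimes_i D^{a_i}P^{b_i}$ and using that transposition, Kronecker product, and conjugation by $S$ all factor through the tensor decomposition, I would reduce to a $2\times 2$ identity: $S(D^aP^b)^TS^{-1}$ equals $\pm D^aP^b$, and the sign is $(-1)^{ab + a + b}$ (here $D^T = D$, $P^T = P$, $DP = -PD$, and one tracks the sign carefully using $D,P$ anticommute). Multiplying the signs over the $m+1$ tensor slots and over the global scalar from $K$ versus $S^{\otimes(m+1)}$, the total sign on $\mathcal{J}_{\vec w}$ is $(-1)^{\sum a_ib_i + (a_1+b_1)} = (-1)^{q(\vec w)}$; hence $\mathcal{J}_{\vec w} K + K \mathcal{J}_{\vec w}^T = 0$ exactly when $q(\vec w) = 1$. (The extra $a_1 + b_1$ in $q$ is precisely the discrepancy between $K$ and the plain Kronecker power $S^{\otimes(m+1)}$, which affects only the first slot.)

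Assembling the pieces: the $\mathcal{J}_{\vec w}$ for $\vec w \in Q$ lie in $\mathfrak{sp}_{2^{m+1}}(R)$, they are $R$-linearly independent (being a subset of the basis $\{\mathcal{J}_{\vec w}\}_{\vec w\in W}$ of $\mathfrak{gl}_{2^{m+1}}(R)$), and their number equals the rank of $\mathfrak{sp}_{2^{m+1}}(R)$; since $\mathfrak{sp}_{2^{m+1}}(R)$ is a free $R$-module (it is the kernel of the $R$-linear map $X\mapsto XK+KX^T$ splitting off a free complement, or one simply counts a monomial basis), a linearly independent set of size equal to the rank spans. I expect the main obstacle to be the bookkeeping of signs in the $2\times 2$ reduction and correctly matching the global sign coming from $K$ to the affine term $a_1+b_1$ in $q$; everything else is routine linear algebra over $R$, with the only ring-theoretic input being that $2$ (hence $2^{m+1}$) is invertible because $\cha(R)$ is odd, which is what makes the trace-pairing argument for linear independence valid over $R$ rather than just over a field.
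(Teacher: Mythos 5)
Your proposal follows essentially the same route as the paper's proof: the membership criterion is obtained by exactly the paper's sign computation (the paper writes $K = DP\otimes I_{2^m}$, which is your $S\otimes I_2^{\otimes m}$, and shows $K\mathcal{J}_{\vec w}^{T} = (-1)^{q(\vec w)}\mathcal{J}_{\vec w}K$), followed by linear independence and a count matching $|Q|$ with the rank $2^{m}(2^{m+1}+1)$ of $\mathfrak{sp}_{2^{m+1}}(R)$. Your two variations are harmless and arguably cleaner: for independence the paper invokes Kronecker-product properties of $\{I,D,P,DP\}$, while you use the trace pairing $\tr(\mathcal{J}_{\vec w}\mathcal{J}_{\vec w'})=\pm 2^{m+1}\delta_{\vec w,\vec w'}$ (valid since $2$ is a unit); for $|Q|$ the paper performs an explicit partition count over cases, while you quote the standard point count for a minus-type quadratic form, which is consistent with the Witt-index-$m$ fact the paper already cites.

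The one step whose justification needs repair is the spanning step. The principle you invoke --- that in a free module a linearly independent set of size equal to the rank spans --- is false over general commutative rings (take $\{2\}$ in the free $\mathbb{Z}$-module $\mathbb{Z}$), so freeness of $\mathfrak{sp}_{2^{m+1}}(R)$ alone does not give the conclusion; the same issue appears where you pass from independence of $\{\mathcal{J}_{\vec w}\}_{\vec w\in W}$ to its being a basis of $\mathfrak{gl}_{2^{m+1}}(R)$. What rescues the argument, and what the paper explicitly uses, is finiteness of $R$: the submodule generated by the $|Q|$ independent elements is free of rank $|Q|$, hence has cardinality $|R|^{|Q|}=|\mathfrak{sp}_{2^{m+1}}(R)|$, forcing equality. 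Alternatively, your own trace-pairing observation can be upgraded to avoid finiteness: the Gram matrix of $\{\mathcal{J}_{\vec w}\}_{\vec w\in W}$ is diagonal with unit entries, so the transition matrix to the standard basis $\{e_{ij}\}$ of $\mathfrak{gl}_{2^{m+1}}(R)$ has unit determinant and $\{\mathcal{J}_{\vec w}\}_{\vec w\in W}$ is genuinely a basis; then, for $X\in\mathfrak{sp}_{2^{m+1}}(R)$ written as $X=\sum_{\vec w}c_{\vec w}\mathcal{J}_{\vec w}$, applying the involution $X\mapsto -KX^{T}K^{-1}$ and comparing coefficients gives $2c_{\vec w}=0$, hence $c_{\vec w}=0$, for every $\vec w$ with $q(\vec w)=0$, so the family indexed by $Q$ spans. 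Either fix closes the gap; as written, the final sentence of your argument does not.
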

\begin{proof}
	Write $\mathcal{J}_{\vec{w}} = J_{(a_1, b_1)} \otimes \mathcal{J}_{\vec{v}}$, where $\vec{v} = (a_2, \ldots, a_{m+1}; b_2, \ldots, b_{m+1})$.	Note that $K = DP \otimes I_{2^m}$. We show that if $\vec{w} \in Q$, then $\mathcal{J}_{\vec{w} } \in \mathfrak{sp}_{2^{m + 1}}(R)$.
	Set $S = \sum_{i = 1}^{m + 1}a_i b_i$. 
	Consider
	\begin{align*}
	K \mathcal{J}_{\vec{w}}^T &= (-1)^{S} K \mathcal{J}_{\vec{w}} \\
	&= (-1)^{S} (DP \otimes I_{2^m}) (J_{(a_1, b_1)} \otimes \mathcal{J}_{\vec{v}}) \\
	&= (-1)^{S} (DP J_{(a_1, b_1)})  \otimes \mathcal{J}_{\vec{v}} \\
	&= (-1)^{S} (DP D^{a_1}P^{b_1})  \otimes \mathcal{J}_{\vec{v}} \\
	&= (-1)^{S + a_1 + b_1} (D^{a_1}P^{b_1}D P)  \otimes \mathcal{J}_{\vec{v}}  \\
	&= (-1)^{S + a_1 + b_1} (J_{(a_1, b_1)} D P)  \otimes \mathcal{J}_{\vec{v}}  \\
	&= (-1)^{S + a_1 + b_1} (J_{(a_1, b_1)} D P)  \otimes \mathcal{J}_{\vec{v}} I_{2^m} \\
	&= (-1)^{S + a_1 + b_1} (J_{(a_1, b_1)} \otimes \mathcal{J}_{\vec{v}}) (D P \otimes  I_{2^m}) \\
	&= (-1)^{S + a_1 + b_1} \mathcal{J}_{\vec{w}}K \\
	&= (-1)^{q(\vec{w})} \mathcal{J}_{\vec{w}}K.
	\end{align*}
	Since $\vec{w} \in Q$, $K \mathcal{J}_{\vec{w}}^T = - \mathcal{J}_{\vec{w}}K$, i.e. $\mathcal{J}_{\vec{w}} \in \mathfrak{sp}_{2^{m + 1}}(R)$.
	
	Note that the set $\{J_{(0,0)}, J_{(0, 1)}, J_{(1, 0)}, J_{(1, 1)}\}$ is linearly independent. It follows from basic properties about Kronecker (tensor) products that the set $\{ \mathcal{J}_{\vec{w}} : 0 \neq \vec{w} \in W  \}$ is linearly independent and so is the set $\{ \mathcal{J}_{\vec{w}} :\vec{w} \in Q  \}$.  
	To complete the proof, we show that $|Q| = 2^m (2^{m + 1} + 1)$ which is the rank of $\mathfrak{sp}_{2^{m + 1}} (R)$ as a free $R$-module. Then $\Span_R (\{ \mathcal{J}_{\vec{w}} : \vec{w} \in Q \}) = \mathfrak{sp}_{2^{m + 1}}(R)$ since $R$ is finite. Let $\vec{w} = (a_1, \ldots, a_{m + 1}; b_1, \ldots, b_{m + 1}) \in Q$. Then 
	\[
	a_1 b_1 + a_1 + b_1 = 1 + \sum_{i = 2}^{m + 1} a_i b_i.
	\]
	{\bf Case 1}: $a_1 = 0$. Then $b_1 = 1 + \sum_{i = 2}^{m + 1} a_i b_i $. Hence, 
	\[
	\Omega_0 = \{ \vec{w} \in Q : \vec{w} = (0, a_2, \ldots, a_{m+1};  b_1, \ldots, b_{m + 1})\}
	\] 
	has $2^{2m}$ elements. \\
	{\bf Case 2}: $a_1 = 1$. Then $\sum_{i = 2}^{m + 1} a_i b_i = 0$ and $b_1$ is $0$ or $1$. Let
	\[
	\Omega_j = 
	\begin{cases}
	\{ \vec{w} \in Q : \vec{w} = (1, 0, \ldots, 0; b_1, \ldots, b_{m + 1})\} &\text{ if } j = 1,\\
	\{ \vec{w} \in Q : \vec{w} = (1, 0, \ldots,0 , 1, a_{j + 1},\ldots, a_{m+1}; b_1, \ldots, b_{m + 1}) \} &\text{ if } 2 \leq j \leq m+1.
	\end{cases}
	\]
	Then $|\Omega_1| = 2^{m + 1}$. For $2 \leq j \leq m + 1$, if $a_2 = \ldots = a_{j - 1} = 0$ and $a_j = 1$, then $b_2, \ldots, b_{j - 1}$ are $0$ or $1$ and $b_j = \sum_{i = j + 1}^{m + 1} a_i b_i$. Thus, $|\Omega_j | = 2^{2m - j + 1}$. If $a_2 = \ldots = a_m = 0$ and $a_{m + 1} = 1$, then $b_2, \ldots, b_m$ are $0$ or $1$ and $a_{m + 1} = b_{m + 1} = 1$. Thus, $|\Omega_{m + 1} | = 2^{m}$.
	
	Note that $\{ \Omega_0, \Omega_1, \ldots, \Omega_{m + 1} \}$ is a partition of $Q$. Therefore, 
	\[
	|Q| = \sum_{j = 0}^{m + 1}|\Omega_j| = 2^{2m} +  2^{m + 1} + \sum_{j = 2}^{m + 1}2^{2m - j + 1} = 2^m(2^{m+1} + 1)
	\]
	as desired.
\end{proof}

Next we construct an $\ODAC$ of $\mathfrak{sp}_{2^{m + 1}}(R)$ by using the basis in the above theorem. Note that $\mathfrak{sp}_{2^{m + 1}}(R)$ is a subalgebra of $\mathfrak{sl}_{2^{m + 1}}(R)$ and by Theorem 3.1 in \cite{ฺSZ18}, an $\ODAC$ of $\mathfrak{sl}_{2^{m + 1}}(R)$ is 
\[
\mathfrak{sl}_{2^{m + 1}}(R) = H_\infty \oplus (\oplus_{\alpha \in \mathbb{F}_{2^{m + 1}}}H_\alpha),
\]
where $H_\infty = \braket{\mathcal{J}_{(0; \lambda)}| \lambda \in \mathbb{F}_{2^{m + 1}}^\times}_{\mathbb{F}_{2^{m + 1}}} $ and $H_\alpha = \braket{\mathcal{J}_{(\alpha; \lambda \alpha)}| \lambda \in \mathbb{F}_{2^{m + 1}}^\times}_{\mathbb{F}_{2^{m + 1}}}$ for all $\alpha \in \mathbb{F}_{2^{m + 1}}$. The basis in Theorem \ref{basisofC} is the union of some subsets of these $H_j$'s. 
We show that the components of an $\ODAC$
of $\mathfrak{sp}_{2^{m + 1}}(R)$ can be obtained from the $H_i$'s by picking up the elements whose index belongs to $Q$. We use the following lemma to verify the constructed decomposition is an $\ODAC$.

\begin{lemma}\label{Walpha}
	For each $\alpha \in \mathbb{F}_{2^{m + 1}}$, let $W_\alpha = \{(\lambda; \alpha \lambda) \in W : \lambda \in \mathbb{F}_{2^{m + 1}}^\times \}$, and let $W_\infty = \{(0; \lambda) \in W : \lambda \in \mathbb{F}_{2^{m + 1}}^\times \}$. Then 
	\begin{enumerate}[(1)]
		\item $W = \Big( \bigcup_{\alpha \in \mathbb{F}_{2^{m + 1}}} \dot{W}_\alpha \Big) \cup \dot{W}_\infty $ where  $\dot{W}_\alpha = W_\alpha \cup \{(0; 0)\}$,  $\dot{W}_\infty = W_\infty \cup \{(0; 0)\}$ are subspaces of $W$.
		\item For $\alpha \in \mathbb{F}_{2^{m + 1}} \cup \{ \infty \}$, if $Q_\alpha = W_\alpha \cap Q$, then $\dot{W}_\alpha = \braket{Q_\alpha}_{\mathbb{F}_2}$.
	\end{enumerate}
\end{lemma}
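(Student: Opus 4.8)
The plan is to prove both parts by direct verification, treating $W$ as a $2(m+1)$-dimensional $\mathbb F_2$-vector space with the symplectic form described above. For part (1), first I would observe that each $\dot W_\alpha$ (for $\alpha \in \mathbb F_{2^{m+1}}$) is literally the set $\{(\lambda;\alpha\lambda) : \lambda \in \mathbb F_{2^{m+1}}\}$, which is the graph of the $\mathbb F_2$-linear map $\lambda \mapsto \alpha\lambda$ on $\mathbb F_{2^{m+1}}$; hence it is an $(m+1)$-dimensional $\mathbb F_2$-subspace of $W$. Similarly $\dot W_\infty = \{(0;\lambda):\lambda \in \mathbb F_{2^{m+1}}\} = 0 \oplus \mathbb F_{2^{m+1}}$ is an $(m+1)$-dimensional subspace. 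To see that these subspaces cover $W$ and meet pairwise only in $(0;0)$, I would take an arbitrary $(\mu;\nu) \in W$: if $\mu = 0$ it lies in $\dot W_\infty$, and if $\mu \ne 0$ then $\mu$ is a unit in the field $\mathbb F_{2^{m+1}}$, so $(\mu;\nu) = (\mu;\alpha\mu)$ with the unique choice $\alpha = \nu\mu^{-1}$, placing it in exactly one $\dot W_\alpha$. This also shows the intersections are trivial: a nonzero vector determines its ``slope'' $\alpha$ (or $\infty$) uniquely. A quick cardinality check ($(2^{m+2}+1)$ subspaces of size $2^{m+1}$, overlapping only in the zero vector, give $(2^{m+1}+1)(2^{m+1}-1)+1 = 2^{2(m+1)} = |W|$) confirms this is a genuine partition into lines-through-origin-style pieces.

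For part (2), fix $\alpha \in \mathbb F_{2^{m+1}} \cup \{\infty\}$ and recall $Q = \{\vec w \in W : q(\vec w) = 1\}$ where $q(\vec w) = \sum a_i b_i + (a_1+b_1)$ in the symplectic basis $\mathcal B$. The inclusion $\braket{Q_\alpha}_{\mathbb F_2} \subseteq \dot W_\alpha$ is immediate since $Q_\alpha = W_\alpha \cap Q \subseteq \dot W_\alpha$ and $\dot W_\alpha$ is a subspace by part (1). For the reverse inclusion, the key point is that $q$ restricted to the $(m+1)$-dimensional subspace $\dot W_\alpha$ cannot be identically zero: since $(W,q)$ is a nondegenerate quadratic space of Witt index $m$ by Proposition 1.5.42 of \cite{BHR13}, its maximal totally singular subspaces have dimension exactly $m$, so the $(m+1)$-dimensional $\dot W_\alpha$ must contain a vector $\vec w_0$ with $q(\vec w_0) = 1$, i.e. $Q_\alpha \ne \emptyset$. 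Then for any $\vec v \in \dot W_\alpha$, either $q(\vec v) = 1$, so $\vec v \in Q_\alpha$; or $q(\vec v) = 0$, in which case $\vec v = \vec w_0 + (\vec v + \vec w_0)$ and I must check that $\vec v + \vec w_0 \in Q_\alpha$. Now $\vec v + \vec w_0 \in \dot W_\alpha$ (a subspace), and using the identity $\braket{\vec v,\vec w_0} = q(\vec v) + q(\vec w_0) + q(\vec v + \vec w_0)$ together with the fact that $\dot W_\alpha$ is totally isotropic for the symplectic form (it is one of the maximal totally isotropic subspaces of dimension $m+1$ noted in the text, so $\braket{\vec v,\vec w_0} = 0$), I get $q(\vec v + \vec w_0) = q(\vec v) + q(\vec w_0) = 0 + 1 = 1$, hence $\vec v + \vec w_0 \in Q_\alpha$. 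Therefore every $\vec v \in \dot W_\alpha$ is a sum of one or two elements of $Q_\alpha$, giving $\dot W_\alpha \subseteq \braket{Q_\alpha}_{\mathbb F_2}$.

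The main obstacle I anticipate is verifying cleanly that each $\dot W_\alpha$ is totally isotropic for $\braket{\cdot,\cdot}$ and that the restriction of $q$ to it is nonzero — these are exactly the two facts that make the ``sum of at most two singular vectors'' argument work. The total isotropy should follow either from a direct computation with the formula $\braket{\vec w,\vec w'} = \sum(a_ib_i' - a_i'b_i)$ on vectors of the shape $(\lambda;\alpha\lambda)$ (where bilinearity and symmetry of multiplication in the field collapse the sum to zero), or by invoking that the $\dot W_\alpha$ are among the maximal totally isotropic subspaces already identified in the paragraph preceding the lemma; I would use whichever the earlier exposition makes available. The nonvanishing of $q|_{\dot W_\alpha}$ is then forced purely by the dimension count against the Witt index, so no extra computation is needed there. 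Everything else is routine linear algebra over $\mathbb F_2$.
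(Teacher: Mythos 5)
Your proof is correct, but for part (2) it takes a genuinely different route from the paper. The paper argues by counting: it computes $|Q^c\setminus\{(0;0)\}|=(2^m-1)(2^{m+1}+1)$, uses a pigeonhole argument together with the Witt-index bound (a totally singular subspace of $(W,q)$ has dimension at most $m$, hence at most $2^m$ elements) to force $|W_\alpha\cap Q^c\setminus\{(0;0)\}|=2^m-1$ for every $\alpha$, concludes $|Q_\alpha|=2^m$, and then gets $\dim\braket{Q_\alpha}_{\mathbb F_2}=m+1$ from $|\braket{Q_\alpha}_{\mathbb F_2}|\geq 2^m+1$ inside the $(m+1)$-dimensional $\dot W_\alpha$. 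You instead argue structurally: the Witt-index bound alone gives a single $\vec w_0\in Q_\alpha$, and the polarization identity $q(\vec v+\vec w_0)=q(\vec v)+q(\vec w_0)+\braket{\vec v,\vec w_0}$ together with the total isotropy of $\dot W_\alpha$ shows every singular vector of $\dot W_\alpha$ is a sum of two elements of $Q_\alpha$. Both proofs rest on the same two facts (Witt index $m$, and $\braket{\cdot,\cdot}\equiv 0$ on $\dot W_\alpha$), but yours avoids all cardinality computations and is arguably cleaner; the paper's version yields the exact count $|Q_\alpha|=2^m$ as a byproduct, which your argument does not (nor does the lemma require it). Two small points: the isotropy of $\dot W_\alpha$ is most easily checked from the trace definition, $\braket{(\lambda;\alpha\lambda),(\mu;\alpha\mu)}=\Tr_{\mathbb F_{2^{m+1}}/\mathbb F_2}(\lambda\alpha\mu-\mu\alpha\lambda)=0$, rather than from the coordinate formula or from the text's general remark about maximal isotropic subspaces (which does not explicitly identify the $\dot W_\alpha$ as such); note the paper's own proof also uses this isotropy implicitly, so you should include the one-line computation. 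Also, in your cardinality cross-check for part (1) the count ``$2^{m+2}+1$ subspaces'' is a slip for $2^{m+1}+1$, though the displayed identity you verify uses the correct number, and in any case your slope argument already establishes the partition.
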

\begin{proof}
	It is clear that the $\dot{W}_\alpha$'s are subspaces of $W$ and (1) holds. To prove (2), we first note that $Q^c = W \setminus Q = \{ \vec{w} \in W : q(\vec{w}) = 0 \}$ and 
	\begin{align}\label{Qc}
	|Q^c \setminus \{(0; 0) \}| &= |W \setminus \{(0;0)\}| - |Q| \nonumber \\ 
	&= (2^{2(m + 1)} - 1) - 2^m(2^{m + 1} + 1) \nonumber \\
	&=(2^m - 1)(2^{m + 1} + 1).							
	\end{align}
	We show that for all $\alpha \in \mathbb{F}_{2^{m + 1}} \cup \{ \infty \}, |W_\alpha \cap (Q^c \setminus \{(0; 0)\})| \geq 2^m - 1$. Suppose, to the contrary, that there exists an $\alpha$ such that $|W_\alpha \cap (Q^c \setminus \{(0; 0)\})|< 2^m - 1$. Then by (\ref{Qc}), there exists an $\alpha'$ such that $|W_{\alpha'} \cap (Q^c \setminus \{(0; 0)\})| \geq 2^m $. So $|\dot{W}_{\alpha'} \cap Q^c| \geq 2^m + 1$. But $\dot{W}_{\alpha'} \cap Q^c$ is a totally isotopic subspace of $(W, q)$. Indeed, if $\vec{w}_1, \vec{w}_2 \in \dot{W}_{\alpha'} \cap Q^c$, then $q(\vec{w}_1 + \vec{w}_2) = q(\vec{w}_1) + q(\vec{w}_2) + \braket{\vec{w}_1, \vec{w}_2} = 0$. Thus, $\dim (\dot{W}_{\alpha'} \cap Q^c) \leq m$ and as a subspace over $\mathbb{F}_2$, $|\dot{W}_{\alpha'} \cap Q^c| \leq 2^m$. This is a contradiction.
	
	Now, for each $\alpha \in \mathbb{F}_{2^{m + 1}} \cup \{ \infty \}$, by (\ref{Qc}), $|W_\alpha \cap (Q^c \setminus \{(0; 0)\})| = 2^m - 1$, and hence,
	\[
	|W_\alpha \cap Q| = (2^{m + 1} - 1) - (2^m - 1) = 2^m.
	\]
	Let $Q_\alpha = W_\alpha \cap Q$. Then $\braket{Q_\alpha}_{\mathbb{F}_2}$ is a totally isotopic subspace of $(W, \braket{\cdot, \cdot})$ and $\dot{W}_\alpha \supseteq \braket{Q_\alpha}_{\mathbb{F}_2}$. We have $\dim (\braket{Q_\alpha}_{\mathbb{F}_2}) \leq m + 1$. But since
	$|\braket{Q_\alpha}_{\mathbb{F}_2}| \geq |Q_\alpha| + 1 = 2^m + 1$, $\dim (\braket{Q_\alpha}_{\mathbb{F}_2}) \geq m + 1$ which forces $\dim (\braket{Q_\alpha}_{\mathbb{F}_2}) = m + 1$. Thus, $|\braket{Q_\alpha}_{\mathbb{F}_2}| = 2^{m + 1} = |\dot{W}_\alpha|$, and so $\dot{W}_\alpha = \braket{Q_\alpha}_{\mathbb{F}_2}$.
\end{proof}

\begin{thm}\label{ODACtypeC}
	For a positive integer $m$, $\mathfrak{sp}_{2^{m + 1}}(R)$ has an $\ODAC$ obtained by restricting an $\ODAC$ of $\mathfrak{sl}_{2^{m  + 1}}(R)$ constructed in Theorem 3.1 in \cite{ฺSZ18}.	
\end{thm}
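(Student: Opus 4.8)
The plan is to produce the $\ODAC$ of $\mathfrak{sp}_{2^{m+1}}(R)$ by intersecting each component of the $\ODAC$ of $\mathfrak{sl}_{2^{m+1}}(R)$ recalled above with $\mathfrak{sp}_{2^{m+1}}(R)$. Concretely, for $\alpha\in\mathbb{F}_{2^{m+1}}\cup\{\infty\}$ put $Q_\alpha:=W_\alpha\cap Q$ and $H_\alpha':=\langle\mathcal{J}_{\vec w}:\vec w\in Q_\alpha\rangle_R$. The first observation is that $H_\alpha'=H_\alpha\cap\mathfrak{sp}_{2^{m+1}}(R)$: because $\{\mathcal{J}_{\vec v}:0\neq\vec v\in W\}$ is $R$-linearly independent, an element of $H_\alpha$ lies in $\mathfrak{sp}_{2^{m+1}}(R)$ precisely when, in its expansion over the $\mathcal{J}_{\vec w}$, only indices $\vec w\in W_\alpha\cap Q$ occur. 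By Lemma~\ref{Walpha}(1) the sets $W_\alpha$ partition $W\setminus\{0\}$, hence the $Q_\alpha$ partition $Q$, and Theorem~\ref{basisofC} (that $\{\mathcal{J}_{\vec w}:\vec w\in Q\}$ is a basis of $\mathfrak{sp}_{2^{m+1}}(R)$) then gives $\mathfrak{sp}_{2^{m+1}}(R)=\bigoplus_\alpha H_\alpha'$ as $R$-modules, each $H_\alpha'$ being free of rank $|Q_\alpha|=2^m$ (Lemma~\ref{Walpha}), which is exactly the rank of a Cartan subalgebra of $\mathfrak{sp}_{2^{m+1}}$.

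The two easy requirements of an $\ODAC$ come for free. Each $H_\alpha'$ is an abelian Lie subalgebra because $H_\alpha'\subseteq H_\alpha$ and $H_\alpha$ is abelian; one may also see this directly, since $PD=-DP$ yields $\mathcal{J}_{\vec v}\mathcal{J}_{\vec w}=\pm\mathcal{J}_{\vec v+\vec w}$ with $[\mathcal{J}_{\vec v},\mathcal{J}_{\vec w}]=0$ when $\langle\vec v,\vec w\rangle=0$ and $[\mathcal{J}_{\vec v},\mathcal{J}_{\vec w}]=\pm 2\,\mathcal{J}_{\vec v+\vec w}$ when $\langle\vec v,\vec w\rangle=1$, while $\dot{W}_\alpha$ is totally isotropic for $\langle\cdot,\cdot\rangle$ by Lemma~\ref{Walpha}(2). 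Orthogonality of distinct components is inherited as well: the Killing form of $\mathfrak{sp}_{2^{m+1}}(R)$ is $(4n+2)\tr(\cdot\,\cdot)$ with $2n=2^{m+1}$, the trace $\tr(\mathcal{J}_{\vec u})$ vanishes unless $\vec u=0$ (one tensor factor of $\mathcal{J}_{\vec u}$ already has zero trace), so $\tr(\mathcal{J}_{\vec v}\mathcal{J}_{\vec w})=0$ whenever $\vec v\neq\vec w$, and indices taken from $Q_\alpha$ and $Q_\beta$ with $\alpha\neq\beta$ are always distinct.

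The substantive point — the one where the complex-analytic argument (invoke Lie's theorem to recognize a maximal toral subalgebra) is unavailable over $R$ and must be replaced by an explicit computation in the basis of Theorem~\ref{basisofC} — is that each $H_\alpha'$ is self-normalizing in $\mathfrak{sp}_{2^{m+1}}(R)$. Take $x\in N_{\mathfrak{sp}_{2^{m+1}}(R)}(H_\alpha')$ and write $x=\sum_{\vec v\in Q}c_{\vec v}\mathcal{J}_{\vec v}$. For any $\vec w_0\in Q_\alpha$, the bracket $[x,\mathcal{J}_{\vec w_0}]$ is the sum of the terms $\pm 2c_{\vec v}\mathcal{J}_{\vec v+\vec w_0}$ over those $\vec v\in Q$ with $\langle\vec v,\vec w_0\rangle=1$; each such $\vec v+\vec w_0$ again lies in $Q$, because $q(\vec v+\vec w_0)=q(\vec v)+q(\vec w_0)+\langle\vec v,\vec w_0\rangle=1$, and $\vec v\mapsto\vec v+\vec w_0$ is injective, so these are distinct basis vectors. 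Since $[x,\mathcal{J}_{\vec w_0}]\in H_\alpha'$, it follows that $\pm 2c_{\vec v}=0$, hence $c_{\vec v}=0$ (here $\cha(R)$ odd makes $2$ a unit), whenever $\langle\vec v,\vec w_0\rangle=1$ and $\vec v+\vec w_0\notin Q_\alpha$. To conclude $x\in H_\alpha'$ it therefore suffices to show that for each $\vec v_0\in Q\setminus Q_\alpha$ there is some $\vec w_0\in Q_\alpha$ with $\langle\vec v_0,\vec w_0\rangle=1$ (then $\vec v_0+\vec w_0\notin\dot{W}_\alpha$, hence $\notin Q_\alpha$, since $\vec w_0\in\dot{W}_\alpha$ but $\vec v_0\notin\dot{W}_\alpha$ — indeed $\vec v_0\in W_\beta$ for some $\beta\neq\alpha$ and the $W_\gamma$ are disjoint). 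This is where Lemma~\ref{Walpha} is essential: $\dot{W}_\alpha$ is a maximal totally isotropic subspace of $W$, so it equals its own perpendicular, and as $\vec v_0\notin\dot{W}_\alpha$ the $\mathbb{F}_2$-linear functional $\vec u\mapsto\langle\vec v_0,\vec u\rangle$ on $\dot{W}_\alpha$ is nonzero; thus $\{\vec u\in\dot{W}_\alpha:\langle\vec v_0,\vec u\rangle=1\}$ has $2^m$ elements, as does $Q_\alpha$, and both avoid $0$, while $|\dot{W}_\alpha\setminus\{0\}|=2^{m+1}-1<2^m+2^m$; hence the two sets intersect, giving the required $\vec w_0$. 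Consequently $x=\sum_{\vec v\in Q_\alpha}c_{\vec v}\mathcal{J}_{\vec v}\in H_\alpha'$, and since $H_\alpha'$ is abelian the reverse inclusion $H_\alpha'\subseteq N_{\mathfrak{sp}_{2^{m+1}}(R)}(H_\alpha')$ is immediate, so $H_\alpha'$ is nilpotent and self-normalizing, i.e. an abelian Cartan subalgebra. This completes the verification that $\mathfrak{sp}_{2^{m+1}}(R)=\bigoplus_\alpha H_\alpha'$ is an $\ODAC$, the self-normalizer computation being the only step I expect to require real care.
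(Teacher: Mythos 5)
Your proposal is correct and follows essentially the same route as the paper: restrict the $\ODAC$ of $\mathfrak{sl}_{2^{m+1}}(R)$ to the components $H'_\alpha$ indexed by $Q_\alpha = W_\alpha\cap Q$, get the direct sum, abelianness and orthogonality from Theorem~\ref{basisofC} and the commutator/trace relations, and do the real work in the self-normalizer computation by finding, for each $\vec v_0\in Q\setminus Q_\alpha$, some $\vec w_0\in Q_\alpha$ with $\braket{\vec v_0,\vec w_0}=1$. The only (minor) divergence is how that existence is justified: you use maximal isotropy of $\dot W_\alpha$ plus a counting argument with $|Q_\alpha|=2^m$, whereas the paper deduces it from Lemma~\ref{Walpha}(2) together with $N_{\mathfrak{sl}_{2^{m+1}}(R)}(H_\alpha)=H_\alpha$; both are valid.
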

\begin{proof}
	For each $\alpha \in \mathbb{F}_{2^{m + 1}}$, let
	\begin{align*}
	H'_\alpha = \braket{\mathcal{J}_{(\lambda; \alpha \lambda)} | \lambda \in \mathbb{F}_{2^{m + 1}}^\times \text{ and } (\lambda; \alpha \lambda) \in Q}_R,
	\end{align*}
	and let
	\begin{align*}
	H'_\infty = \braket{\mathcal{J}_{(0;  \lambda)} | \lambda \in \mathbb{F}_{2^{m + 1}}^\times \text{ and } (0; \lambda) \in Q}_R.
	\end{align*}
	It follows from the proof of 
	Theorem 3.1 in \cite{ฺSZ18} and Theorem \ref{basisofC} that all $H'_\alpha$'s, $\alpha \in  \mathbb{F}_{2^{m + 1}} \cup \{ \infty \}$ are orthogonal abelian subalgebras of $\mathfrak{sp}_{2^{m + 1}}(R)$ and the sum of all these $H'_\alpha$'s is direct. Thus, 
	\[
	\mathfrak{sp}_{2^{m + 1}}(R) = H'_\infty \oplus (\oplus_{\alpha \in \mathbb{F}_{2^{m + 1}}}H'_\alpha).
	\]
	To show that each $H_\alpha'$ is a self-normalizer in $\mathfrak{sp}_{2^{m + 1}}(R)$, let $\alpha \in \mathbb{F}_{2^{m + 1}}$ and $A \in N_{\mathfrak{sp}_{2^{m + 1}}(R)}(H'_\alpha)$. Then
	\[
	A = \sum_{\beta' \in \mathbb{F}_q} \Bigg( \sum_{\substack{\lambda' \in \mathbb{F}_q^\times \\ (\lambda'; \beta' \lambda' ) \in Q} }a_{(\lambda', \beta')}\mathcal{J}_{(\lambda'; \beta' \lambda')} \Bigg) + \sum_{\substack{\lambda' \in \mathbb{F}_q^\times \\ (0; \lambda' ) \in Q}} b_{\lambda'} \mathcal{J}_{(0; \lambda')}
	\]
	where $a_{(\lambda', \beta')}$ and $b_{\lambda'}$ are elements in $R$. For any $\mathcal{J}_{(\lambda; \alpha \lambda)} \in H'_\alpha$, 
	\begin{align*}
	[A, \mathcal{J}_{(\lambda; \alpha \lambda)}] = \sum_{\beta' \in \mathbb{F}_q} \Bigg( \sum_{\substack{\lambda' \in \mathbb{F}_q^\times \\ (\lambda'; \beta' \lambda' ) \in Q} } a_{(\lambda', \beta')}[\mathcal{J}_{(\lambda'; \beta' \lambda')}, \mathcal{J}_{(\lambda; \alpha \lambda)}] \Bigg) 
	 + \sum_{\substack{\lambda' \in \mathbb{F}_q^\times \\ (0; \lambda' ) \in Q}} b_{\lambda'} [\mathcal{J}_{(0; \lambda')}, \mathcal{J}_{(\lambda; \alpha \lambda)}] \in H'_\alpha.
	\end{align*}
	This implies 
	\[
	\sum_{\substack{\beta' \in \mathbb{F}_q \\ \beta' \neq \alpha} } \Bigg( \sum_{\substack{\lambda' \in \mathbb{F}_q^\times \\ (\lambda'; \beta' \lambda' ) \in Q} } a_{(\lambda', \beta')}[\mathcal{J}_{(\lambda'; \beta' \lambda')}, \mathcal{J}_{(\lambda; \alpha \lambda)}] \Bigg) + \sum_{\substack{\lambda' \in \mathbb{F}_q^\times \\ (0; \lambda' ) \in Q}} b_{\lambda'} [\mathcal{J}_{(0; \lambda')}, \mathcal{J}_{(\lambda; \alpha \lambda)}] \in H'_\alpha.
	\]
	For each $(\lambda', \beta')$, if for all $(\lambda; \alpha \lambda) \in Q$, $\braket{(\lambda; \alpha \lambda ),  (\lambda'; \beta' \lambda')} = 0$, then by Lemma \ref{Walpha}, $\mathcal{J}_{(\lambda'; \beta' \lambda')}$ would be in $N_{\mathfrak{sl}_{2^{m + 1}}(R)}(H_\alpha) = H_\alpha$ which is a contradiction. So, we may assume that we can choose $(\lambda; \alpha \lambda) \in Q$ such that $\braket{(\lambda; \alpha \lambda ) , (\lambda'; \beta' \lambda')} = 1$. 
	Argue as in the proof of Theorem 3.1 in \cite{ฺSZ18}, we obtain $a_{(\lambda', \beta' )} = 0$. Similarly, $b_{\lambda'} = 0$. Thus, $A \in H'_\alpha$, and so $N_{\mathfrak{sp}_{2^{m + 1}}(R)}(H'_\alpha) = H'_\alpha$. By an analogous argument, we also have $N_{\mathfrak{sp}_{2^{m + 1}}(R)}(H'_\infty) = H'_\infty$. Hence, $\mathfrak{sp}_{2^{m + 1}}(R)$ has an $\ODAC$.
\end{proof}

\section{$\ODAC$ of $\mathfrak{so}_n$}\label{son}
We again assume that $R$ has odd characteristic.
Recall that 
\[
\mathfrak{so}_{2n}(R) = \braket{X_{(i, j)} | 1 \leq i \neq j \leq 2n}_R,
\]
where $X_{(i, j)} = e_{ij} - e_{ji}$ and $e_{ij}$ is the matrix having $1$ in the $(i, j)$ position and $0$ elsewhere. We utilize these basis elements to construct an $\ODAC$ of this Lie algebra. This technique was also used for an OD of $\mathfrak{so}_{2n}(\mathbb{C})$ \cite{KKU84,KT94}. Note that the Killing form is equal to
\[
K(A, B) = (2 n - 2) \tr (AB)
\]
for all $A, B \in \mathfrak{so}_{2n}(R)$.

The matrices $X_{(i, j)}$'s satisfy the following properties:

\begin{lemma}\label{propXij}
	Keep the above notations and denoted by $\{\cdot, \cdot \}$ an unordered pair, we have
	\begin{enumerate}[(1)]
		\item $X_{(i, j)} = -X_{(j, i)}$.
		\item If $\{i, j \} \neq \{k, l\}$, then $\tr (X_{(i, j)}  X_{(k, l)}) = 0 $. 
		\item $[X_{(i, j)}, X_{(k, l)}] = 
		\begin{cases}
		X_{(i, l)}  & \text{ if }  j = k, \\
		0           & \text{ if }  \{ i, j \} \cap \{ k, l \} = \O.
		\end{cases}$
	\end{enumerate}
\end{lemma}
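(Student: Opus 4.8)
The plan is to reduce all three statements to the multiplication rule for matrix units, $e_{ab}e_{cd} = \delta_{bc}\,e_{ad}$, together with $\tr(e_{ab}) = \delta_{ab}$. Statement (1) is immediate from the definition, since $X_{(j,i)} = e_{ji} - e_{ij} = -(e_{ij} - e_{ji}) = -X_{(i,j)}$.

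For (2) and (3), I would first expand the product
\[
X_{(i,j)}X_{(k,l)} = (e_{ij}-e_{ji})(e_{kl}-e_{lk}) = \delta_{jk}\,e_{il} - \delta_{jl}\,e_{ik} - \delta_{ik}\,e_{jl} + \delta_{il}\,e_{jk}
\]
by bilinearity and the matrix-unit rule. Taking the trace gives $\tr(X_{(i,j)}X_{(k,l)}) = 2\delta_{jk}\delta_{il} - 2\delta_{ik}\delta_{jl}$. Since $i\neq j$ and $k\neq l$ for genuine basis elements of $\mathfrak{so}_{2n}(R)$, the term $\delta_{jk}\delta_{il}$ is nonzero only if $j=k$ and $i=l$, i.e. $\{i,j\}=\{k,l\}$, and $\delta_{ik}\delta_{jl}$ only if $i=k$ and $j=l$, again $\{i,j\}=\{k,l\}$. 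Hence the hypothesis $\{i,j\}\neq\{k,l\}$ forces the trace to be $0$, which is (2).

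For (3) I would subtract from the displayed identity the analogous expansion of $X_{(k,l)}X_{(i,j)}$ and simplify the Kronecker-delta terms case by case. If $\{i,j\}\cap\{k,l\}=\emptyset$, every delta occurring vanishes, so both products are $0$ and $[X_{(i,j)},X_{(k,l)}]=0$. If $j=k$, substituting and using $i\neq j$, $j\neq l$ collapses $X_{(i,j)}X_{(j,l)}$ to $e_{il}+\delta_{il}e_{jj}$ and $X_{(j,l)}X_{(i,j)}$ to $e_{li}+\delta_{il}e_{jj}$; the $\delta_{il}e_{jj}$ terms cancel in the commutator, leaving $[X_{(i,j)},X_{(j,l)}] = e_{il}-e_{li} = X_{(i,l)}$, and this remains valid in the degenerate subcase $i=l$ since then both sides are $0$. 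The only point demanding attention is this bookkeeping of coincident indices; there is no genuine obstacle here, the lemma being a direct matrix computation.
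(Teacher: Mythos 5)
Your proof is correct and takes essentially the same approach as the paper: both reduce the lemma to the matrix-unit multiplication rule and expand $X_{(i,j)}X_{(k,l)}$ directly, with your Kronecker-delta bookkeeping (including the degenerate $i=l$ subcase) just a slightly more systematic version of the paper's case analysis.
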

\begin{proof}
	The first property is clear from the definition. To prove (2), we first compute
	\begin{align*}
	X_{(i, j)}X_{(k, l)} &= (e_{ij} - e_{ji})(e_{kl} - e_{lk}) \\
	&= e_{ij}e_{kl} - e_{ij}e_{lk} - e_{ji}e_{kl} + e_{ji}e_{lk}.
	\end{align*}
	Assume that $\{i, j \} \neq \{k, l\}$.
	We consider two distinct cases. \\
	{\bf Case 1}: $i \neq k$ and $l$. We have $X_{(i, j)}X_{(k, l)} = e_{ij}e_{kl} - e_{ij}e_{lk}$. Then $\tr (X_{(i, j)}X_{(k, l)}) = 0$. \\
	{\bf Case 2}: $j \neq k$ and $l$. We have $X_{(i, j)}X_{(k, l)} = - e_{ji}e_{kl} + e_{ji}e_{lk}$. Then $\tr (X_{(i, j)}X_{(k, l)}) = 0$.
	
	Finally, 
	\begin{align*}
	[X_{(i, j)}, X_{(k, l)}]& = X_{(i, j)}X_{(k, l)} - X_{(k, l)}X_{(i, j)} \\
	&= (e_{ij}e_{kl} - e_{ij}e_{lk} - e_{ji}e_{kl} + e_{ji}e_{lk}) - (e_{kl}e_{ij} - e_{kl}e_{ji} - e_{lk}e_{ij} + e_{lk}e_{ji}) \\
	&=  \begin{cases}
	X_{(i, l)}  & \text{ if }  j = k, \\
	0           & \text{ if }  \{ i, j \} \cap \{ k, l \} = \O,
	\end{cases}
	\end{align*}
	as claimed.
\end{proof}

We use the relations in the above lemma to construct an $\ODAC$ of $\mathfrak{so}_{2 n} (R)$. To do that, we introduce the following set of unordered pairs and its partition. Let 
\[
X = \{ \{ i, j \} : 1 \leq i \neq j \leq 2n \}
\]
and let
\[
\mathcal{P} = \{ M_k : 1 \leq k \leq 2n - 1 \}
\]
be a partition of $X$, where $|M_k| = n$ and $\alpha \cap \beta = \O$ for any $\alpha, \beta \in M_k$ such that $\alpha \neq \beta$.

This $\mathcal{P}$ can be viewed as a partition of the complete graph with vertex set $\{ 1, 2, \ldots, 2n \}$ and edge set $X$, it is also called $1$-factorization of the graph which is constructible \cite[Theorem 9.1]{H69}.
Note that $|X| = n (2n - 1)$ which is equal to the rank of $\mathfrak{so}_{2 n}(R)$ as an $R$-module.

\begin{thm}\label{ODACoftypeD}
	For a positive integer $n$, $\mathfrak{so}_{2n}(R)$ has an $\ODAC$
	\[
	\mathfrak{so}_{2n}(R) = H_1 \oplus H_2 \oplus \cdots \oplus H_{2n - 1},
	\]
	where $H_k = \braket{X_{(i, j)} | \{i, j\} \in M_k }_R$.
\end{thm}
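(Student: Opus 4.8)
The plan is to verify the three defining properties of an ODAC for the decomposition $\mathfrak{so}_{2n}(R) = H_1 \oplus \cdots \oplus H_{2n-1}$: that the sum is direct and exhausts $\mathfrak{so}_{2n}(R)$, that each $H_k$ is an abelian Cartan subalgebra, and that distinct $H_k$ are orthogonal under the Killing form. The directness and spanning are immediate: since $\mathcal{P}=\{M_1,\dots,M_{2n-1}\}$ is a partition of the edge set $X$, the sets $\{X_{(i,j)} : \{i,j\}\in M_k\}$ over all $k$ partition the standard basis $\{X_{(i,j)} : 1\le i\ne j\le 2n\}$ of $\mathfrak{so}_{2n}(R)$ (using $X_{(i,j)}=-X_{(j,i)}$ from Lemma \ref{propXij}(1) to pass between ordered and unordered pairs), so $\bigoplus_k H_k = \mathfrak{so}_{2n}(R)$ as free $R$-modules.

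Next I would check that each $H_k$ is abelian: if $\{i,j\},\{k,l\}\in M_k$ are distinct, then by the defining property of a $1$-factorization they are disjoint, so $\{i,j\}\cap\{k,l\}=\varnothing$ and Lemma \ref{propXij}(3) gives $[X_{(i,j)},X_{(k,l)}]=0$; since also $[X_{(i,j)},X_{(i,j)}]=0$, the module $H_k$ is an abelian subalgebra, hence nilpotent. For orthogonality, take $k_1\ne k_2$ and basis elements $X_{(i,j)}\in H_{k_1}$, $X_{(p,q)}\in H_{k_2}$; since $M_{k_1}$ and $M_{k_2}$ are distinct blocks of the partition, $\{i,j\}\ne\{p,q\}$, so Lemma \ref{propXij}(2) gives $\tr(X_{(i,j)}X_{(p,q)})=0$, and since the Killing form on $\mathfrak{so}_{2n}(R)$ is $(2n-2)\tr(AB)$, extending bilinearly shows $K(H_{k_1},H_{k_2})=0$.

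The main work is the self-normalizing property $N_{\mathfrak{so}_{2n}(R)}(H_k)=H_k$. Let $A\in N_{\mathfrak{so}_{2n}(R)}(H_k)$ and write $A=\sum_{\ell}\sum_{\{p,q\}\in M_\ell} c_{p,q}X_{(p,q)}$ with $c_{p,q}\in R$ (choosing one orientation per unordered pair). For each fixed $\{i,j\}\in M_k$, the bracket $[A,X_{(i,j)}]$ must lie in $H_k$. Using Lemma \ref{propXij}(3), a term $c_{p,q}[X_{(p,q)},X_{(i,j)}]$ is nonzero only when $\{p,q\}$ meets $\{i,j\}$ in exactly one vertex, in which case it equals $\pm c_{p,q}X_{(p',?)}$ for an edge involving the ``other'' endpoints; such an edge generically does \emph{not} lie in $M_k$. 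The argument is: if some coefficient $c_{p,q}$ with $\{p,q\}\notin M_k$ is nonzero, pick $\{i,j\}\in M_k$ sharing exactly one vertex with $\{p,q\}$ — this exists because $M_k$ is a perfect matching on $\{1,\dots,2n\}$, so each of $p$ and $q$ lies in a unique edge of $M_k$, and at least one of those edges is not $\{p,q\}$ itself and meets $\{p,q\}$ in a single vertex. Then the $X_{(p,q)}$-contribution to $[A,X_{(i,j)}]$ produces a basis element outside $H_k$ that cannot be cancelled (no two distinct terms of the sum can produce the same off-$M_k$ basis element with opposite sign, since the map $(\{p,q\},\{i,j\})\mapsto$ resulting edge is injective on the relevant pairs once $\{i,j\}$ is fixed), contradicting $[A,X_{(i,j)}]\in H_k$. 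Hence $c_{p,q}=0$ for all $\{p,q\}\notin M_k$, i.e.\ $A\in H_k$.

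The step I expect to be the main obstacle is making the cancellation argument in the self-normalizer computation fully rigorous: I must carefully track which edge $X_{(p,q)}$ produces when bracketed against a chosen $X_{(i,j)}\in M_k$, verify that for a fixed $\{i,j\}$ distinct nonzero source terms land on distinct basis vectors (so no accidental cancellation occurs over the ring $R$, where we cannot divide), and check that at least one such resulting edge genuinely lies outside $H_k$. A clean way to organize this is to fix a vertex $v$ and consider all edges of $M_k$ through the other vertices; for each bad edge $\{p,q\}$ one selects the matching edge at $p$ (or $q$) in $M_k$ and reads off that the bracket contributes $X_{(q,p')}$ where $\{p,p'\}\in M_k$ — this edge $\{q,p'\}$ lies in $M_k$ only in degenerate low-rank situations, which one handles separately. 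This mirrors the argument for $\mathfrak{sl}_n$ and $\mathfrak{sp}$ in the earlier sections, where the combinatorial ``separating'' role played here by the $1$-factorization is played there by the symplectic form, so the structure of the proof is parallel even though the bookkeeping differs.
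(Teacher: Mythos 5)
Your proposal is correct and follows essentially the same route as the paper's proof: directness and spanning from the partition, commutativity and orthogonality from Lemma \ref{propXij}, and the self-normalizer step by bracketing a ``bad'' edge $\{p,q\}\notin M_k$ against the matching edge of $M_k$ at one of its endpoints. One small simplification: the degenerate case you set aside never occurs --- if $\{p,p'\}\in M_k$, then $\{q,p'\}\in M_k$ would place $p'$ in two distinct edges of the perfect matching $M_k$, so the resulting basis element always lies outside $H_k$ and your cancellation argument (which is in fact spelled out more carefully than in the paper's own brief proof) closes without any separate case analysis.
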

\begin{proof}
	By Lemma \ref{propXij} (2) and (3), we have the orthogonality and the commutativity of $H_k$'s. Next, we show that $N_{\mathfrak{so}_{2n}(R)}(H_k) = H_k$. Let $A \in N_{\mathfrak{so}_{2n}(R)}(H_k)$ and write it as a linear combination of the elements $X_{(i, j)}$
	\[
	A = \sum_{i \neq j}\alpha_{ij} X_{(i, j)}.
	\]
	For any $X_{(s, t)} \in H_k$, 
	\[
	[A, X_{(s, t)}] = \sum_{i \neq j}\alpha_{ij}  [X_{(i, j)}, X_{(s, t)} ] \in H_k,
	\]
	and so 
	\[
	\sum_{\substack{i \neq j \\ \{i, j\} \notin M_k}}\alpha_{ij}  [X_{(i, j)}, X_{(s, t)} ] \in H_k.
	\]
	For each pair $(i, j)$, 
	since the $M_k$'s form a partition of $X$, there exists $X_{(j, t)} \in H_k$ such that $t \neq i$ and $[X_{(i, j)}, X_{(j, t)}] = X_{(i, t)} \neq 0$ by Lemma \ref{propXij}. Therefore, $\alpha_{ij} = 0$, and so $A \in H_k$.
\end{proof}

Finally, we present the existence of an $\ODAC$ of the Lie algebra
\[
\mathfrak{so}_{2n - 1}(R) = \braket{X_{(i, j)} | 1 \leq i \neq j \leq 2n - 1}_R.
\]
Note that the Killing form is equal to
\[
K(A, B) = (2 n - 3) \tr (AB)
\]
for all $A, B \in \mathfrak{so}_{2n - 1}(R)$.
Similarly, we let 
\[
X' = \{ \{ i, j \} : 1 \leq i \neq j \leq 2n - 1 \}.
\]
In the next step, we construct a partition of this set into subsets $M'_k$ satisfying 
\[
|M'_k| = n - 1 \text{ and } \alpha \cap \beta = \O \text{ for all } \alpha, \beta \in M'_k, \alpha \neq \beta.
\]
The construction can be obtained from all $M_k$'s of the construction of an $\ODAC$ of $\mathfrak{so}_{2n}(R)$ in the above discussion.
Without loss of generality, we assume that each $M_k$ contains the pair $\{k, 2n\}$.
Let $M'_k = M_k \setminus \{k, 2n\}$. 

\begin{thm}\label{ODACoftypeB}
	For a positive integer $n \geq 2$,  $\mathfrak{so}_{2n - 1}(R)$ has an $\ODAC$
	\[
	\mathfrak{so}_{2n - 1}(R) = H'_1 \oplus H'_2 \oplus \cdots \oplus H'_{2n - 1},
	\]
	where $H'_k = \braket{X_{(i, j)} | \{i, j\} \in M'_k }_R$.
\end{thm}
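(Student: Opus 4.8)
The plan is to mirror the proof of Theorem \ref{ODACoftypeD}, since the construction of the $M'_k$ is just the restriction of a $1$-factorization of $K_{2n}$ to the vertex set $\{1,\dots,2n-1\}$ after deleting the vertex $2n$ and the edge it meets in each factor. First I would check that $\{M'_1,\dots,M'_{2n-1}\}$ is genuinely a partition of $X'$: each $M_k$ is an $n$-element matching of $K_{2n}$ containing $\{k,2n\}$, so $M'_k = M_k\setminus\{\{k,2n\}\}$ is an $(n-1)$-element matching of $K_{2n-1}$, and since $\bigcup_k M_k = X$ with the edges through $2n$ being exactly $\{\{k,2n\}:1\le k\le 2n-1\}$, deleting them gives $\bigcup_k M'_k = X'$ with the union still disjoint. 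Hence $\sum_k |M'_k| = (2n-1)(n-1) = |X'| = \operatorname{rank}_R \mathfrak{so}_{2n-1}(R)$, so the $H'_k$ span and the sum is direct as $R$-modules.

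Next I would establish orthogonality and commutativity of the $H'_k$ exactly as in Theorem \ref{ODACoftypeD}: since each $M'_k$ is a matching, Lemma \ref{propXij}(2) gives $\operatorname{Tr}(X_{(i,j)}X_{(k,l)})=0$ for distinct pairs inside one factor and across factors, so $K(H'_k,H'_l)=0$ for all $k,l$ (in particular $k\ne l$), while Lemma \ref{propXij}(3) gives $[X_{(i,j)},X_{(k,l)}]=0$ whenever $\{i,j\}\cap\{k,l\}=\emptyset$, so each $H'_k$ is abelian, hence nilpotent. The only genuinely new point is the self-normalizer property $N_{\mathfrak{so}_{2n-1}(R)}(H'_k)=H'_k$, and here the argument from Theorem \ref{ODACoftypeD} needs a small adjustment. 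Writing $A=\sum_{i\ne j}\alpha_{ij}X_{(i,j)}$ with $A\in N(H'_k)$, for any $X_{(s,t)}\in H'_k$ we get $\sum_{\{i,j\}\notin M'_k}\alpha_{ij}[X_{(i,j)},X_{(s,t)}]\in H'_k$; to conclude $\alpha_{ij}=0$ for a pair $\{i,j\}\notin M'_k$ I need, for the index $j$ (or $i$), a pair $\{j,t\}\in M'_k$ with $t\notin\{i\}$ so that $[X_{(i,j)},X_{(j,t)}]=X_{(i,t)}$ is a nonzero basis element not lying in $H'_k$ and not cancelled by the other terms.

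The main obstacle — and the reason $n\ge 2$ is assumed — is exactly this matching-saturation step on $2n-1$ vertices: a vertex $v$ of $K_{2n-1}$ is \emph{unmatched} in precisely one factor $M'_k$ (namely the one that had used $\{v,2n\}$), so for a given $j$ there could be one $k$ for which no pair $\{j,t\}\in M'_k$ exists. I would handle this by using the other endpoint: given $\{i,j\}\notin M'_k$ with $\alpha_{ij}\ne 0$, at least one of $i,j$ is matched in $M'_k$ (both can fail to be matched only if $\{i,j\}=\{k',k''\}$ for the two... in fact each factor leaves exactly one vertex unmatched, so at most one of $i,j$ is unmatched in $M'_k$), say $j$ is matched by $\{j,t\}\in M'_k$; if $t=i$ then $\{i,j\}\in M'_k$, contradiction, so $t\ne i$ and $[X_{(i,j)},X_{(j,t)}]=X_{(i,t)}\ne 0$. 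Collecting the coefficient of this $X_{(i,t)}$ on the right-hand side (arguing as in the proof of Theorem 3.1 of \cite{ฺSZ18} and of Theorem \ref{ODACoftypeD} that distinct pairs $\{i,j\}$ produce distinct, non-overlapping brackets so no cancellation occurs) forces $\alpha_{ij}=0$. Hence $A\in H'_k$, giving $N_{\mathfrak{so}_{2n-1}(R)}(H'_k)=H'_k$, and therefore $\mathfrak{so}_{2n-1}(R)=H'_1\oplus\cdots\oplus H'_{2n-1}$ is an $\ODAC$.
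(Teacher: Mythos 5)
Your proposal is correct and follows essentially the same route as the paper: reduce everything except the self-normalizer property to the argument of Theorem \ref{ODACoftypeD}, and handle the one vertex left unmatched in each $M'_k$ (namely $k$) by passing to the other endpoint of the pair, which is exactly what the paper does by invoking Lemma \ref{propXij}(1) to interchange $i$ and $j$ when $j=k$. Your additional explicit checks of the partition, rank count, orthogonality and commutativity are just the details the paper compresses into ``analogous arguments.''
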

\begin{proof}
	We only need to show that each $H'_k$ is a self-normalizer because analogous arguments from the proof of Theorem \ref{ODACoftypeD} can be used to prove the rest. 
	Let $A \in N_{\mathfrak{so}_{2n - 1}(R)}(H_k)$ and write it as a linear combination of the elements $X_{(i, j)}$
	\[
	A = \sum_{i \neq j}\alpha_{ij} X_{(i, j)}.
	\]
	For any $X_{(s, t)} \in H_k$, 
	\[
	[A, X_{(s, t)}] = \sum_{i \neq j}\alpha_{ij}  [X_{(i, j)}, X_{(s, t)} ] \in H_k,
	\]
	and so 
	\[
	\sum_{\substack{i \neq j \\ \{i, j\} \notin M'_k}}\alpha_{ij}  [X_{(i, j)}, X_{(s, t)} ] \in H_k.
	\]
	For each pair $(i, j)$, if $j \neq k$, we can use the argument provided in Theorem \ref{ODACoftypeD} to show $\alpha_{ij} = 0$. If $j = k$, we use the relation (1) of Lemma \ref{propXij} to interchange $i$ and $j$. This completes the proof.
\end{proof}



\end{document}